\newtheorem{thm}{Theorem}[section]
\newtheorem{lemma}[thm]{Lemma}
\newtheorem{cor}[thm]{Corollary}
\newtheorem{prop}[thm]{Proposition}
\newtheorem{sublemma}[thm]{Sublemma}
\theoremstyle{definition}  
\numberwithin{equation}{section}
\newtheorem{remark}[thm]{Remark}
\theoremstyle{definition}
\theoremstyle{remark}
\newcounter{enumitemp}
\DeclareMathOperator{\Fix}{Fix}
\DeclareMathOperator{\Aut}{Aut}
\DeclareMathOperator{\Diff}{Diff}
\DeclareMathOperator{\mcg}{MCG}
\DeclareMathOperator{\Homeo}{Homeo}
\newcommand{\cC}{{\cal C}}
\newcommand{\C}{{\mathbb C}}
\newcommand{\R}{\mathbb R}
\newcommand{\cP}{{\cal P}}
\newcommand{\T}{\mathbb T}
\newcommand{\Z}{\mathbb Z}
\def\D{{\mathbb D}}
\def\sl3z{SL(3, \mathbb Z)}
\newcommand{\trycomment}[1]{}
\title{Triviality of some representations of 
$\mcg(S_g)$ in $GL(n,\C), \Diff(S^2)$ and $\Homeo(\T^2)$}
\author{John Franks,\thanks{Supported in part by NSF grant DMS0099640.}\ \ 
Michael Handel\thanks{Supported in part by NSF grant DMS0103435.}}
\begin{document}
\maketitle
\begin{abstract}
We show the triviality of representations 
of the mapping class group of a genus $g$ surface in 
$GL(n,\C), \Diff(S^2)$ and $\Homeo(\T^2)$ when 
appropriate restrictions on the genus $g$ and the size of $n$
hold.  For example, if $S_g$ is a surface of finite type
and $\phi : \mcg(S_g) \to GL(n,\C)$ is a
homomorphism, then $\phi$ is trivial provided the genus 
$g \ge 3$ and $n < 2g.$  We also show
that if $S_g$ is a closed surface with genus $g \ge 7$,
then every homomorphism $\phi: \mcg(S_g) \to \Diff(S^2)$
is trivial and that if $g \ge 3$,
then every homomorphism  $\phi: \mcg(S_g) \to \Homeo(\T^2)$
is trivial.
\end{abstract}
\section{Introduction and Statement of Results} 

If  $S$ is a surface of genus $g$ with a (perhaps empty) finite set of
punctures and  boundary components we will denote by 
$\mcg(S)$ the group of isotopy classes of homeomorphisms
of $S$ which pointwise fix the boundary and punctures of $S$.
In case the boundary is non-empty the isotopies defining 
$\mcg(S)$ must pointwise fix the boundary.  This is a well
studied object (see the recent book by Farb and Margalit \cite{FM}
for an extensive survey of known results).  In this article
we show the triviality of representations 
of $\mcg(S)$  in 
$GL(n,\C), \Diff(S^2)$ and $\Homeo(\T^2)$ under various
additional hypotheses.

For a closed surface $S$ of genus $g$ there is a natural
representation of $\mcg(S)$ into the group of symplectic matrices of
size $n = 2g$ obtained by taking the induced action on
$H_1(S,\R).$ It is natural to ask if there are linear representations
of lower dimension. This is one of the questions we address.

\begin{thm}\label{thm:lin-rep}  Suppose that 
 $S$ is a genus $g\ge 1$ surface of finite type (perhaps with boundary
and punctures), that $n < 2g$  and that  $\phi : \mcg(S) \to GL(n,\C)$ is a
homomorphism.   Then the image $H$ of $\phi$ is abelian.  If $g = 2$ then $H$ is finite cyclic; if $g \ge 3$ then $H$ is trivial.  
\end{thm}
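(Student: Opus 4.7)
The plan is to reduce to showing that $\phi(T_a) = \phi(T_b)$ for any two non-separating simple closed curves $a, b$ on $S$, and then to invoke the known computation of the abelianization of $\mcg(S)$.

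To carry out this reduction, I would first choose a chain $c_1, c_2, \ldots, c_{2g}$ of non-separating simple closed curves on $S$ with $i(c_j, c_{j+1}) = 1$ and $c_j \cap c_k = \emptyset$ for $|j - k| \ge 2$; such a chain exists on every orientable surface of finite type and genus at least one. Setting $T_j = T_{c_j}$, these Dehn twists satisfy the braid relations $T_j T_{j+1} T_j = T_{j+1} T_j T_{j+1}$ together with $T_j T_k = T_k T_j$ for $|j-k| \ge 2$, so the restriction of $\phi$ to $\langle T_1, \ldots, T_{2g} \rangle$ pulls back along the obvious surjection from the Artin braid group to give a representation $\rho : B_{2g+1} \to GL(n, \C)$.

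The next step is to show that all $\rho(T_j)$ are equal to a common matrix $X \in GL(n, \C)$. The hypothesis $n < 2g = (2g+1)-1$ places the dimension of $\rho$ strictly below the rank of $B_{2g+1}$, which is the range where the Formanek--Lee--Sysoeva--Vershinin classification of low-dimensional complex braid-group representations forces $\rho$ (up to a one-dimensional twist) to be a direct sum of one-dimensional characters factoring through $B_{2g+1}^{\mathrm{ab}} \cong \Z$. Since the Artin generators of $B_{2g+1}$ are mutually conjugate, they all map to the same matrix in any such decomposition, so $\rho(T_j) = X$ independently of $j$. Where $g$ is too small for the cleanest form of this classification, I would fall back on a hands-on matrix argument exploiting $ABA = BAB$ together with the conjugacy of $A = \rho(T_j)$ and $B = \rho(T_{j+1})$. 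The graph whose vertices are isotopy classes of non-separating simple closed curves on $S$, with edges joining pairs of geometric intersection one, is connected for every surface of finite type and genus $g \ge 1$, so iterating the chain argument yields $\phi(T_a) = X$ for every non-separating simple closed curve $a$.

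At this point the image $H$ is generated by $X$ together with the $\phi$-images of the Dehn twists along boundary-parallel curves and small loops around punctures. These latter twists are central in $\mcg(S)$, so their images commute with $X$, and $H$ is abelian, proving the first assertion. For the stronger statements I would invoke the abelianization computation for $\mcg(S)$: for $g \ge 3$, the theorem of Powell, together with its extensions to bounded and punctured surfaces by Korkmaz and others, asserts that $\mcg(S)$ is perfect, so the abelian image $H$ must be trivial; for $g = 2$, the abelianization is finite cyclic, so $H$ is finite cyclic. The main technical obstacle will be executing the ``common matrix'' step in the small-$g$ regime, where the cleanest form of the Formanek-type theorems may not directly apply; there one likely must supplement with explicit matrix-level arguments built from the braid relation and the conjugacy of Dehn twists on non-separating curves.
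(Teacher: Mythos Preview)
Your outline is a legitimate alternative and is essentially the route taken by Korkmaz in his independent proof of this theorem (which the paper cites but does not follow). The paper's own argument is quite different: it runs a double induction on $(g,n)$, fixes curves $\alpha,\beta$ meeting once, sets $L_\alpha=\phi(T_\alpha)$, and performs a case analysis on the eigenvalue and generalized-eigenspace structure of $L_\alpha$, using the genus-$(g-1)$ complementary subsurface $S'$ and a sublemma about invariant subspaces to feed each case back into the inductive hypothesis. Your route trades that self-contained linear algebra for an appeal to the classification of low-dimensional braid-group representations; this is shorter but imports a substantial external theorem, whereas the paper's proof is elementary and internal.

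Two points where your write-up needs tightening. First, the assertion that the Formanek-type classification forces $\rho$ to be ``a direct sum of one-dimensional characters'' is not accurate as stated: $B_{2g+1}$ is not reductive, so a reducible representation need not split. What the classification gives you is that every irreducible subquotient in dimension below $2g$ is one-dimensional, hence $\rho$ is simultaneously triangularizable with all $\rho(\sigma_i)$ sharing the same diagonal; passing from there to $\rho(\sigma_1)=\cdots=\rho(\sigma_{2g})$ still requires an argument using the braid and far-commutation relations. You gesture at this with your ``hands-on matrix argument,'' and it does go through, but it is genuine content rather than a formality---in effect it is where the work in the paper's Case~2 and Case~3 analysis is hiding. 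Second, your closing remark about twists along boundary-parallel curves and loops around punctures is both unnecessary and slightly off: twists about curves bounding once-punctured disks are trivial, and in any event $\mcg(S)$ is already generated by Dehn twists along non-separating curves (the paper's Lemma~\ref{generating set}), so once all of those map to $X$ the image is automatically cyclic.
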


This result improves a result of L. Funar \cite{Funar} which
shows that homomorphisms from the genus $g \ge 1$ mapping class group
to $GL(n,\C)$ have finite image provided $n \le \sqrt{g+1}.$

It has come to our attention that our Theorem~(\ref{thm:lin-rep})
has also been proved independently by Mustafa Korkmaz \cite{kork3}.

Since the induced map on homology gives a non-trivial homomorphism from 
$\mcg(S)$ to $GL(2g,\C)$ for a closed surface $S$ of genus $g$, it is
clear that one cannot improve on the restriction that $n < 2g.$

In light of the result of Morita which asserts that there is
no lifting of $\mcg(S_g)$ to $\Diff(S_g)$, i.e.  no homomorphism $\phi
\mcg(S_g) \to \Diff(S_g)$, with $\phi(\sigma)$ a representative of the
isotopy class $\sigma$, it is natural to ask about the existence of
homomorphisms $\phi \mcg(S_g) \to \Diff(S_{g'})$.  An immediate
corollary of a recent result of Aramayona and Souto, \cite{AandS},
shows that in some ranges of values for $g$ and $g'$ any such
homomorphism is trivial.

More precisely they show:

\begin{thm}[Aramayona \& Souto \cite{AandS}]\label{AandS}
Suppose that $S$ and $S'$ are closed surfaces, where $S$ has 
genus $g \ge 6$ and $S'$ has genus less than $2g - 1$.  Then
any non-trivial homomorphism $\phi: \mcg(S) \to \mcg(S')$
is an isomorphism and in particular $S$ and $S'$ have the
same genus.
\end{thm}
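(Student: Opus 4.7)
The plan is to follow the classical strategy for proving rigidity of homomorphisms between mapping class groups: analyze the images of Dehn twists, convert commutation and braid relations into combinatorial constraints on curves in $S'$, and then apply Ivanov's rigidity theorem for the curve complex to conclude that $\phi$ is induced by a homeomorphism $S \to S'$.

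First I would establish a basic dichotomy. Since all non-separating Dehn twists are conjugate in $\mcg(S)$, and since $\mcg(S)$ is generated by such twists, either $\phi$ kills every non-separating Dehn twist (and hence is trivial) or $\phi(T_\alpha)$ has infinite order for every non-separating simple closed curve $\alpha$. Assuming the latter, I would combine a rank computation for the centralizer of $T_\alpha$ in $\mcg(S)$ with the Birman--Lubotzky--McCarthy bound on the rank of abelian subgroups of $\mcg(S')$, together with the Nielsen--Thurston classification, to show that $\phi(T_\alpha)$ is a pure reducible mapping class with no pseudo-Anosov component --- hence a multi-twist supported on a canonical multi-curve $\mu(\alpha) \subset S'$.

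Next I would extract combinatorial information. If $\alpha, \beta$ are disjoint non-separating curves in $S$, then $T_\alpha$ and $T_\beta$ commute, forcing $\mu(\alpha)$ and $\mu(\beta)$ to be disjoint. If $\alpha, \beta$ meet in a single point, then the braid relation $T_\alpha T_\beta T_\alpha = T_\beta T_\alpha T_\beta$ must descend under $\phi$; a direct analysis of two-generator subgroups of $\mcg(S')$ generated by multi-twists satisfying this relation forces each $\mu(\alpha), \mu(\beta)$ to be a single curve and these two curves to intersect exactly once in $S'$. Applying this procedure to the Humphries generators for $\mcg(S)$ produces a simplicial map $\Phi: \mathcal{C}(S) \to \mathcal{C}(S')$ of curve complexes.

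The main obstacle is proving that $\Phi$ is injective, and this is exactly where the hypothesis $g' < 2g - 1$ enters. One exhibits configurations of curves in $S$ --- for example, a chain of $2g+1$ non-separating curves, or a carefully chosen collection of curves bounding a subsurface of large genus --- whose image under $\Phi$ is obstructed on genus/complexity grounds from collapsing on a surface of genus less than $2g-1$, so any nontrivial collapsing contradicts the non-triviality assumption. The hypothesis $g \ge 6$ provides enough room in $S$ to arrange these test configurations and to avoid sporadic low-genus pathologies. Once $\Phi$ is shown to be an injective simplicial map, Ivanov's curve complex rigidity theorem produces a homeomorphism $f: S \to S'$ inducing $\Phi$, which forces $g = g'$. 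Finally, because $\phi$ agrees with conjugation by $f_*$ on the Humphries generators, the two homomorphisms agree globally, so $\phi$ itself is an isomorphism, completing the proof.
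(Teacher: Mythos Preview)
The paper does not prove this theorem at all: it is quoted verbatim from Aramayona and Souto \cite{AandS} and used as a black box to derive the short corollary immediately following it. So there is nothing in the present paper to compare your argument against.

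That said, your outline is recognizably the Aramayona--Souto strategy (Dehn twists go to multitwists via centralizer-rank and Nielsen--Thurston arguments; commutation and braid relations produce a simplicial map of curve complexes; a genus obstruction forces injectivity; Ivanov-type rigidity upgrades this to an isomorphism induced by a homeomorphism). As a sketch of that paper it is plausible, but several of the steps you pass over in a sentence are the substantive content of \cite{AandS}: showing that $\phi(T_\alpha)$ is actually a \emph{power of a single Dehn twist} (not just a multitwist) requires a delicate analysis beyond ``a direct analysis of two-generator subgroups,'' and the injectivity of the curve-complex map is precisely where the numerical hypothesis $g' < 2g-1$ does real work --- your description of this step (``configurations \ldots obstructed on genus/complexity grounds from collapsing'') is too vague to count as an argument. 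If you intend this as a proof rather than a summary, those two places are where the genuine difficulties lie and would need to be filled in.
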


A nearly immediate corollary is that representations of
$\mcg(S)$ into the group $\Homeo(S')$ of orientation
preserving homeomorphisms of a surface $S'$ with lower
genus are frequently trivial.

\begin{cor}
Suppose that $S$ and $S'$ are closed surfaces, where $S$ has 
genus $g \ge 6$ and $S'$ has genus $g',$ with $1 <g'< 2g - 1.$
Then every homomorphism $\phi: \mcg(S) \to \Homeo(S')$
is trivial.
\end{cor}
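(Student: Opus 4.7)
The plan is to post-compose $\phi$ with the natural projection $\pi : \Homeo(S') \to \mcg(S')$ and apply Theorem~\ref{AandS} to $\psi := \pi \circ \phi$. That theorem shows $\psi$ is either trivial, or else an isomorphism forcing $g' = g$. The task splits into ruling out each of the two non-trivial possibilities for $\phi$.

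Suppose first that $\psi$ is trivial, so that $\phi(\mcg(S)) \subseteq \Homeo_0(S')$, the identity component. Since $g' \ge 2$ it is classical that no non-trivial finite-order homeomorphism of $S'$ is isotopic to the identity; in particular $\Homeo_0(S')$ is torsion-free. On the other hand, a theorem of Luo says that $\mcg(S)$ is generated by involutions whenever $g \ge 3$. Each such involution must be sent by $\phi$ to a torsion element of $\Homeo_0(S')$, hence to the identity, and so $\phi$ is trivial.

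Now suppose $\psi$ is an isomorphism; then $g=g'$ and we identify $S$ with $S'$. By Ivanov's theorem (applicable since $g \ge 3$), $\psi$ is induced by conjugation by the isotopy class of some $h \in \Homeo(S)$. Setting $\phi'(\sigma) := h^{-1}\phi(\sigma) h$ produces another homomorphism $\mcg(S) \to \Homeo(S)$, and a routine check gives $\pi \circ \phi' = \mathrm{id}_{\mcg(S)}$. Thus $\phi'$ would be a group-theoretic section of $\pi$, contradicting Markovic's theorem on the non-existence of such sections (valid for $g \ge 5$, hence here). This case therefore does not occur, and $\phi$ is trivial.

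The main obstacle is not any novel computation — the argument is essentially bookkeeping around Theorem~\ref{AandS} — but rather identifying and correctly citing the three outside inputs: Markovic's no-section theorem for $\Homeo(S_g)$, the torsion-freeness of $\Homeo_0(S_{g'})$ for $g' \ge 2$, and Luo's generation of $\mcg(S_g)$ by involutions. If one wished to weaken the hypothesis to $g \ge 2$ one would also need the Markovic--\v Sari\'c extension of the no-section theorem, but $g \ge 6$ is well within the range where the original Markovic statement applies.
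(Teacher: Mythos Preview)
Your argument is correct, and in the first case it is essentially the paper's proof: when the induced map $\psi : \mcg(S) \to \mcg(S')$ is trivial, both you and the paper observe that $\Homeo_0(S')$ is torsion-free for $g' \ge 2$ and that $\mcg(S)$ is generated by torsion elements. You cite Luo for generation by involutions; the paper cites Theorem~7.16 of Farb--Margalit for generation by finite-order elements. Either suffices.

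The difference lies in the case $g' = g$, which the hypothesis $1 < g' < 2g-1$ does \emph{not} exclude. The paper's proof simply asserts that Theorem~\ref{AandS} forces $\psi$ to be trivial, but that theorem leaves open the possibility that $\psi$ is an isomorphism when $g' = g$. You close this gap by invoking Ivanov's theorem to conjugate $\psi$ to the identity and then Markovic's theorem to rule out a section of $\Homeo(S) \to \mcg(S)$. That extra step is genuinely needed for the corollary as stated, and your handling of it is correct; the price is two additional outside citations (Ivanov and Markovic), both well within the hypothesis $g \ge 6$.
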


\begin{proof}
By Theorem~(\ref{AandS}) the image of the map to $\mcg(S')$ induced by
$\phi$ is trivial so the image of $\phi$ lies in $\Homeo_0(S')$, the
subgroup of homeomorphisms isotopic to the identity.  If $g \in
\mcg(S)$ has finite order then $\phi(g) \in \Homeo_0(S')$ has finite
order.  Since $S'$ has genus at least two and $\phi(g)$ is both finite
order and isotopic to the identity, it follows that $\phi(g) = id.$
Since $\mcg(S)$ is generated by elements of finite order (see Theorem
7.16 of \cite{FM}), we conclude that $\phi$ is trivial.
\end{proof}

This motivates the question of whether homomorphisms from $\mcg(S)$ to
$\Homeo(S')$ or $\Diff(S')$ are trivial when $S' = S^2$ or $\T^2.$

A classical result of Nielsen states that if $S$ has finite negative
Euler characteristic and is not closed then there are injective
homomorphisms from $\mcg(S)$ to $\Homeo(S^1)$ and hence by coning
such examples it follows that there are also injective
homomorphisms from $\mcg(S)$ to $\Homeo(\D^2)$ and $\Homeo(S^2)$.
Similarly taking the diagonal action we obtain an injective
homomorphisms from $\mcg(S)$ to $\Homeo(\T^2)$
We therefore assume, in the following two results, that our surfaces are
closed. 

\begin{thm}\label{thm:S2}
Suppose that $S$ is a closed surface with 
genus $g \ge 7$.
Then every homomorphism $\phi: \mcg(S) \to \Diff(S^2)$
is trivial.
\end{thm}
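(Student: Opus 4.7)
The plan is to combine Theorem~\ref{thm:lin-rep} with the classical theorem of Ker\'ekj\'art\'o (upgraded in the smooth category by Eilenberg) that every finite-order orientation-preserving diffeomorphism of $S^2$ is smoothly conjugate to a rigid rotation of the round sphere. In particular any nontrivial such element has exactly two fixed points $\{p,q\}$ and is linear in local coordinates around each. Because $\mcg(S_g)$ is perfect for $g \ge 3$, the image of $\phi$ automatically lies in the orientation-preserving subgroup of $\Diff(S^2)$, so we may assume from the outset that all elements of $\phi(\mcg(S_g))$ preserve orientation. Finally, $\mcg(S_g)$ is generated by finite-order elements for $g \ge 3$ (Farb--Margalit Theorem~7.16, as cited in the preceding corollary), so it is enough to show that $\phi(\sigma) = \mathrm{id}$ for every $\sigma$ in a torsion-generating set.

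Fix such a finite-order $\sigma \in \mcg(S_g)$ and suppose for contradiction that $\phi(\sigma) \ne \mathrm{id}$. Then $\phi(\sigma)$ is a nontrivial rotation with fixed-point pair $\{p,q\} \subset S^2$. The $\mcg(S_g)$-centralizer $C(\sigma)$ maps into the $\Diff(S^2)$-centralizer of $\phi(\sigma)$ and therefore preserves $\{p,q\}$; an index-at-most-$2$ subgroup $C^+(\sigma)$ fixes both points, and differentiation at $p$ defines a homomorphism
\[
\rho_p \colon C^+(\sigma) \longrightarrow GL(T_p S^2) \cong GL(2,\R) \subset GL(2,\C).
\]
Since $\phi(\sigma)$ is linear and nontrivial near $p$, $\rho_p(\sigma)$ is a nontrivial torsion element of $GL(2,\R)$.

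The geometric heart of the argument is to choose the torsion generators $\sigma$ so that $\sigma$ itself lies in an embedded mapping class subgroup of $C^+(\sigma)$ to which Theorem~\ref{thm:lin-rep} applies with triviality, rather than mere finite-cyclic, image. The natural family is provided by symmetric subsurface decompositions: write $S_g = \Sigma \cup_{\partial} \Sigma'$ with $\Sigma \cong \Sigma'$ a compact genus-$k$ surface with boundary, let $\sigma$ be any finite-order element of the diagonally embedded $\mcg(\Sigma) \hookrightarrow \mcg(\Sigma) \times \mcg(\Sigma') \hookrightarrow \mcg(S_g)$, and observe that the entire diagonal $\mcg(\Sigma)$ sits inside $C(\sigma)$. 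The hypothesis $g \ge 7$ is what allows $k \ge 3$. Theorem~\ref{thm:lin-rep} applied to the restriction of $\rho_p$ to this diagonal $\mcg(\Sigma)$, with $n = 2 < 2k$, then forces the restriction to be trivial, contradicting $\rho_p(\sigma) \ne I$. Hence $\phi(\sigma) = \mathrm{id}$ for every torsion $\sigma$ of this form.

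The main obstacle is the combinatorial step of assembling enough such $\sigma$'s, obtained by varying the symmetric decomposition, so that their conjugacy classes together normally generate $\mcg(S_g)$. One must in particular handle torsion that does not obviously lie inside any symmetric-diagonal $\mcg(\Sigma)$ with genus $\ge 3$ — for instance the swap involutions associated with the decompositions themselves, or torsion coming from orbifold covers — either by expressing these as products within an already-killed diagonal or by rerunning the centralizer argument with a second commuting involution $\tau$ whose centralizer contains the problematic element inside a new mapping-class-like subgroup. The bound $g \ge 7$ is essentially forced at this stage: it is the smallest genus guaranteeing that each relevant diagonal subgroup has genus at least $3$, which is in turn the exact threshold in Theorem~\ref{thm:lin-rep} between triviality and merely finite-cyclic image.
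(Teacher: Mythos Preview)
Your core mechanism has two fatal flaws. First, you take $\sigma$ to be a finite-order element of the diagonal $D\cong\mcg(\Sigma)$, but $\Sigma$ has nonempty boundary and the mapping class group of a surface with boundary (isotopies rel $\partial$, as in this paper) is torsion-free: there are no such $\sigma$. Second, even ignoring this, your assertion that ``the entire diagonal $\mcg(\Sigma)$ sits inside $C(\sigma)$'' for $\sigma\in D$ would require $\sigma$ to be central in $D\cong\mcg(\Sigma)$, which it is not. So neither the element $\sigma$ nor the inclusion $D\subset C(\sigma)$ on which your derivative contradiction rests is available. The ``combinatorial step'' you worry about at the end is therefore not the real obstacle; the argument breaks earlier.

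The paper uses the same symmetric decomposition and the same diagonal subgroup $G_0\cong\mcg(Y^+)$, but the finite-order element whose fixed set it analyzes is the \emph{swap involution} $\tau$ exchanging the two halves $Y^\pm$: this $\tau$ lies outside $G_0$ but is genuinely centralized by it. If $h=\phi(\tau)\ne id$ then $\Fix(h)=\{p,q\}$; perfectness of $G_0$ (which needs the genus of $Y^+$ to be at least $3$, whence $g\ge 7$) forces $\phi(G_0)$ to fix $p$, and Theorem~\ref{thm:lin-rep} gives $D_p\phi(\eta)=I$ for all $\eta\in G_0$. Because $G_0$ is torsion-free, Ker\'ekj\'art\'o is of no further help; the paper instead invokes Thurston's stability theorem (a finitely generated group of $C^1$ germs with trivial linear part at a common fixed point is trivial or surjects onto $\R$) together with perfectness of $G_0$ to conclude $\phi(G_0)=\{id\}$. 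Then for $\alpha\subset Y^+$ non-separating and $\beta=R(\alpha)$ one has $T_\alpha T_\beta\in G_0$, hence $\phi(T_\alpha)=\phi(T_\beta)^{-1}$ with $(\alpha,\beta)\in\cP$, and a single application of Proposition~\ref{prop:hom}(1) finishes --- no assembly of a torsion generating set of $\mcg(S_g)$ is ever needed.
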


\begin{thm}\label{thm:torus}
Suppose that $S$ is a closed oriented surface with 
genus $g \ge 3$.
Then every homomorphism  $\phi: \mcg(S) \to \Homeo(\T^2)$
is trivial.
\end{thm}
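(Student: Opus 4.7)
The plan has three stages: reduce to $\Homeo_0(\T^2)$ using Theorem \ref{thm:lin-rep}, vanish rotation vectors on commuting Dehn-twist subfamilies using perfectness, then produce a common fixed point via Franks-type theorems and iterate back into the linear regime. For the first stage, the action of $\Homeo(\T^2)$ on $H_1(\T^2;\Z) \cong \Z^2$ gives a homomorphism $\Homeo(\T^2) \to GL(2,\Z) \hookrightarrow GL(2,\C)$ with kernel $\Homeo_0(\T^2)$; composing with $\phi$ yields a linear representation of $\mcg(S)$ of dimension $n = 2 < 2g$, which Theorem \ref{thm:lin-rep} forces to be trivial since $g \ge 3$. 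Hence $\phi(\mcg(S)) \subseteq \Homeo_0(\T^2)$.

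For the second stage, recall that $\mcg(S)$ is perfect for $g \ge 3$, so every homomorphism to an abelian group is trivial. Each $f \in \Homeo_0(\T^2)$ lifts to $\tilde f : \R^2 \to \R^2$ commuting with integer translations, unique up to a $\Z^2$-translation. Given a pants decomposition $\{\alpha_1, \ldots, \alpha_{3g-3}\}$ of $S$, the Dehn twists $T_{\alpha_i}$ generate a free abelian subgroup $\Z^{3g-3} \le \mcg(S)$ whose $\phi$-images commute in $\Homeo_0(\T^2)$; their lifts generate a nilpotent subgroup of the central $\Z^2$-extension $\widetilde{\Homeo_0(\T^2)}$, on which the rotation-vector map is additive modulo the central ambiguity. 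Varying the pants decomposition and transitioning between commuting subfamilies via the braid, lantern, and chain relations of $\mcg(S)$, combined with perfectness, forces all of these rotation vectors to vanish.

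For the third stage, Franks's fixed-point theorem for a torus homeomorphism with a zero-rotation-vector lift gives a fixed point for each $\phi(T_\alpha)$, and his common-fixed-point theorem for commuting families produces a common fixed point $p \in \T^2$ for $\{\phi(T_{\alpha_i})\}$. Since all non-separating Dehn twists are conjugate in $\mcg(S)$ and together generate $\mcg(S)$, $p$ is propagated to a common fixed point for the entire image $\phi(\mcg(S))$. Restricting to $\Homeo(\T^2,p)$ and applying Theorem \ref{thm:lin-rep} once more to the induced action on $H_1(\T^2 \setminus \{p\};\Z) \cong \Z^2$, combined with a final round of Franks-Handel fixed-point analysis near $p$, yields $\phi \equiv \mathrm{id}$.

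The main obstacle is the second stage: organizing the rotation-vector data from different commuting subfamilies into a coherent constraint across $\mcg(S)$, particularly handling non-commuting generators linked by braid and lantern relations. The essential input is the Franks-Handel dynamical toolkit --- rotation-vector additivity on commuting families, common-fixed-point theorems, and Brouwer-type plane-homeomorphism theory --- which converts the algebraic structure of $\mcg(S)$ into the needed dynamical rigidity in $\Homeo_0(\T^2)$.
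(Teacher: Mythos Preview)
Your first stage matches the paper exactly. The remaining stages, however, contain genuine gaps.

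In Stage~2 you treat ``rotation vector'' as if it were a homomorphism from $\Homeo_0(\T^2)$ (or its $\Z^2$-extension) to $\R^2$. It is not: a torus homeomorphism isotopic to the identity has a Misiurewicz--Ziemian rotation \emph{set}, a convex subset of $\R^2$, and the assignment $f\mapsto\rho(f)$ is neither single-valued nor additive under composition. Restricting to a commuting family does not repair this; to extract an additive mean rotation vector you would need a common invariant measure, which you have not produced. Consequently the assertion that perfectness of $\mcg(S)$ forces all rotation vectors to vanish has no content as stated.

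In Stage~3 the propagation step is wrong. If $p$ is a common fixed point for $\{\phi(T_{\alpha_i})\}$ and $T_\beta=gT_{\alpha_1}g^{-1}$, then $\phi(T_\beta)$ fixes $\phi(g)(p)$, not $p$; conjugacy of Dehn twists transports fixed points rather than pinning one down for the whole group. You also invoke a ``common-fixed-point theorem for commuting families'' without stating it; the standard results of this type require area-preservation or other hypotheses you do not have. Even granting a global fixed point $p$, the action on $H_1(\T^2\setminus\{p\})$ coincides with the action on $H_1(\T^2)$, which you already know is trivial, so this extracts nothing new; the closing ``final round of Franks--Handel fixed-point analysis near $p$'' is not a proof.

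For contrast, the paper bypasses all of this dynamics. After your Stage~1 it embeds a concrete finite non-abelian group $H$ of order $2g$ (dihedral type) in $\mcg(S)$ using a symmetric model of $S$ in $\R^3$. A short covering-space/Euler-characteristic argument (Lemma~\ref{lem:abelian}) shows that any finite subgroup of $\Homeo_0(\T^2)$ acts through an abelian quotient, so the image of the commutator $U^2\in H$ is the identity. Since $U^2$ conjugates $T_\alpha$ to $T_\beta$ for a pair $(\alpha,\beta)\in\cP$, Proposition~\ref{prop:hom}(1) finishes immediately. No rotation theory or Brouwer-plane machinery is needed.
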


In the case of $\Diff(S^2)$ we can deal with punctured
surfaces $S$ provided the number of punctures is bounded
by $2g -10$ where $g$ is the genus.

\begin{thm}\label{thm:punc}
Suppose that $S$ is an oriented surface with 
genus $g$ with $k$ punctures and no boundary.  
If $g \ge 7$ and $0 \le k \le 2g-10,$
then every homomorphism $\phi: \mcg(S) \to \Diff(S^2)$
is trivial.
\end{thm}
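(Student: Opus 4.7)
The plan is to reduce Theorem~\ref{thm:punc} to the closed-surface case, Theorem~\ref{thm:S2}, by finding subsurfaces of $S$ whose associated subgroups of $\mcg(S)$ are large enough (after quotienting) to be the mapping class group of a closed surface of genus at least seven, and then extending triviality from these subgroups to all of $\mcg(S)$ using generators.

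First, I would use the hypothesis $0 \le k \le 2g-10$ together with $g \ge 7$ to find, for each non-separating simple closed curve $c \subset S$ that misses the punctures, a subsurface $\Sigma \subset S$ homeomorphic to a closed surface of genus $h \ge 7$ minus an open disk, with $c \subset \Sigma$ and with all $k$ punctures lying in $S \setminus \Sigma$.  The role of the bound $k \le 2g-10$ is precisely to leave enough room: we need $h \ge 7$ while simultaneously cordoning off all $k$ punctures into the complementary disk region, and the ability to do this flexibly (as $c$ varies) forces the genus inequality.  Extension of homeomorphisms by the identity gives an embedding $\iota_\Sigma \colon \mcg(S_{h,1}) \hookrightarrow \mcg(S)$, and the composition $\phi \circ \iota_\Sigma$ is a homomorphism $\mcg(S_{h,1}) \to \Diff(S^2)$.

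Second, I would show that $\phi \circ \iota_\Sigma$ factors through the closed-surface mapping class group $\mcg(S_h)$.  Using the standard sequences
\[
1 \to \Z\langle T_{\partial \Sigma} \rangle \to \mcg(S_{h,1}) \to \mcg(S_{h,*}) \to 1, \qquad
1 \to \pi_1(S_h) \to \mcg(S_{h,*}) \to \mcg(S_h) \to 1,
\]
this amounts to proving that $\phi(T_{\partial \Sigma}) = id$ and that $\phi$ vanishes on the Birman point-pushing subgroup.  For the first, the boundary Dehn twist is central in $\iota_\Sigma(\mcg(S_{h,1}))$ and can be written as a product of commutators of Dehn twists supported in subsurfaces of $S$ of the same form, to which one applies the methods of Theorem~\ref{thm:S2} (linearizing at a common fixed point on $S^2$ via Theorem~\ref{thm:lin-rep} and using the resulting abelian structure).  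For the point-pushing part, the loops that generate $\pi_1(S_h)$ can be realized by curves lying in sub-subsurfaces of genus at least seven, so the same argument applies.  Once the factorization through $\mcg(S_h)$ is in hand, Theorem~\ref{thm:S2} forces $\phi \circ \iota_\Sigma$ to be trivial.

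Finally, I would invoke the classical fact that $\mcg(S)$ is generated by Dehn twists about non-separating simple closed curves together with point-pushing elements around individual punctures.  By the first step, each non-separating Dehn twist lies in some $\iota_\Sigma(\mcg(S_{h,1}))$ on which $\phi$ has just been shown to vanish, and each elementary point-pushing generator is likewise carried by a suitable $\Sigma$ (the bound $k \le 2g-10$ being exactly what is needed to guarantee such a $\Sigma$ exists while keeping the other $k-1$ punctures out of its interior).  Hence $\phi$ is trivial on a generating set, and therefore on all of $\mcg(S)$.  The main obstacle I expect is the second step: verifying that $T_{\partial\Sigma}$ and the point-pushing generators are killed by $\phi$ requires adapting the fixed-point and rotation-number dynamics from the proof of Theorem~\ref{thm:S2} to the punctured setting, rather than citing it as a black box.
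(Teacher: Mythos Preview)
Your strategy differs substantially from the paper's, and Step~2 is a genuine gap rather than a routine verification.

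The paper does \emph{not} reduce to the closed case via subsurfaces.  Instead it reruns the proof of Theorem~\ref{thm:S2} directly on the punctured $S$: one embeds a genus $g_0$ surface $S_0'$ in $\R^3$ symmetrically so that rotation $R$ by $\pi$ about the $z$-axis has exactly $2g_0+2$ fixed points, punctures $S_0'$ at $k \le 2g_0+2$ of these fixed points (so $R$ still acts), and then attaches two symmetric handlebodies $Y^\pm$ of genus $\ge 3$ to obtain $S$.  The involution $\tau = [R]$ lives in $\mcg(S)$, the subgroup $G_0 \cong \mcg(Y^+)$ is perfect (this is where $\text{genus}(Y^\pm) \ge 3$ is used), and the entire fixed-point/Thurston-stability argument of Theorem~\ref{thm:S2} goes through verbatim.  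The bound $k \le 2g-10$ comes precisely from $k \le 2g_0+2$ together with $g = g_0 + 2\cdot\text{genus}(Y^\pm) \ge g_0+6$.

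In your outline, by contrast, the hypothesis $k \le 2g-10$ plays no real role.  Given any non-separating $c$ and any number of punctures, one can always find a genus-$7$ one-holed subsurface $\Sigma$ containing $c$ and missing every puncture (punctures are points; push them into a disk in the connected complement of $c$).  So your explanation of the bound is not correct, and this already signals that the reduction is not capturing the actual constraint.

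The serious issue is Step~2.  To factor $\phi\circ\iota_\Sigma$ through $\mcg(S_h)$ you must show $\phi(T_{\partial\Sigma}) = \mathrm{id}$ and that $\phi$ kills the point-pushing subgroup.  But $T_{\partial\Sigma}$ is a nontrivial separating Dehn twist in $\mcg(S)$; its being central in $\iota_\Sigma(\mcg(S_{h,1}))$ and a product of commutators in a perfect group gives no leverage on its image in $\Diff(S^2)$.  You yourself note that handling this ``requires adapting the fixed-point and rotation-number dynamics from the proof of Theorem~\ref{thm:S2}'' --- but that adaptation \emph{is} the paper's proof.  There is no shortcut here that lets you invoke Theorem~\ref{thm:S2} as a black box: once you carry out the dynamics on $S^2$ needed to kill $T_{\partial\Sigma}$, you will have essentially reproduced the direct argument (for which you need an involution on $S$ itself, hence the specific placement of punctures), and the detour through $\mcg(S_h)$ becomes superfluous.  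A minor additional point: by Lemma~\ref{generating set}, $\mcg(S)$ is already generated by non-separating Dehn twists, so the separate treatment of point-pushing generators in your last step is unnecessary.
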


\begin{remark}
Note that by the coning construction described above, if
$S$ is a surface of genus $\ge 3$ with any number of punctures,
$\mcg(S)$ acts faithfully by {\em homeomorphisms} on $S^2$.  But
Theorem~(\ref{thm:punc}) asserts that if the genus $g$ is $\ge 7$ 
and the number of punctures is $\le 2g-10$ 
then any action by {\em diffeomorphisms} is trivial.
\end{remark}

The authors would like to thank Kiran Parkhe for pointing out
some inaccuracies in an earlier version of this paper.

\section{Criteria for Triviality}

In this section we recall or establish a few elementary properties
of $\mcg(S)$ which we will need. In particular 
Proposition~(\ref{prop:hom}) 
provides useful elementary criteria which imply that a homomorphism
from $\mcg(S)$ to a group $G$ is trivial.

\begin{prop}\label{prop:homology}
Let $S$ denote a surface of genus $g \ge 2$ which has finitely
many (perhaps $0$) punctures and/or boundary components.
Then the abelianization of $\mcg(S)$ is trivial if $g \ge 3$ and $\Z/10\Z$ if
$g = 2.$
\end{prop}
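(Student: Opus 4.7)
The plan is to show that the abelianization $H := H_1(\mcg(S))$ is cyclic, generated by the class of a single Dehn twist about a non-separating simple closed curve, and then to determine the order of this generator.

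By the generation theorems of Dehn, Lickorish, and Humphries, together with their extensions to surfaces with punctures and boundary (see Chapter 4 of \cite{FM}), the group $\mcg(S)$ is generated by Dehn twists about non-separating simple closed curves, supplemented in the bordered case by Dehn twists about boundary components. The change-of-coordinates principle shows that $\mcg(S)$ acts transitively on isotopy classes of non-separating simple closed curves, so any two such Dehn twists are conjugate in $\mcg(S)$ and descend to a common element $t \in H$. A boundary-parallel Dehn twist can be expressed as a product of non-separating twists via a chain relation in a collar subsurface, so it too abelianizes to a multiple of $t$, and consequently $H = \langle t \rangle$ is cyclic.

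For $g \ge 3$, I would embed a four-holed sphere $\Sigma \subset S$ realizing the lantern relation so that each of the four boundary curves of $\Sigma$ and each of the three interior curves $a, b, c$ is non-separating in $S$. Such an embedding exists precisely when $g \ge 3$, since one needs enough genus on each side of each curve to ensure it is non-separating. Abelianizing the lantern relation $T_{d_1} T_{d_2} T_{d_3} T_{d_4} = T_a T_b T_c$ yields $4t = 3t$, hence $t = 0$ and $H = 0$. The punctured and bordered cases reduce to the closed case via the Birman and capping exact sequences: the point-pushing kernel abelianizes to the coinvariants of the symplectic action of $\mcg(S)$ on $H_1(S)$, which are trivial, and the boundary-twist kernel is already contained in $\langle t \rangle$, which vanishes.

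The main obstacle is the $g = 2$ case, where the required lantern configuration is unavailable (a genus-two surface contains at most three disjoint, pairwise non-isotopic essential simple closed curves, one short of the four needed). Here the result $H = \Z/10\Z$ is a classical theorem of Mumford: a chain relation among five consecutively intersecting non-separating curves gives the upper bound $10t = 0$, and an explicit surjection $\mcg(S_2) \twoheadrightarrow \Z/10\Z$ obtained from the abelianization of the hyperelliptic mapping class group (which coincides with all of $\mcg(S_2)$ in genus two) pins the order at exactly $10$. I would invoke this final computation from Chapter 5 of \cite{FM} rather than reproduce it.
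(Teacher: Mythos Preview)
The paper does not actually prove this proposition; it simply cites Theorem~(5.1) of Korkmaz \cite{kork}.  Your sketch, by contrast, lays out the standard argument and is essentially correct: the lantern relation with all seven curves non-separating (available precisely when $g \ge 3$) kills the generator $t$ in the abelianization, and the $g = 2$ computation is Mumford's, for which deferring to \cite{FM} is entirely appropriate---that is effectively what the paper itself does, one citation removed.

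Two minor points.  First, the detour through the Birman and capping exact sequences is unnecessary for $g \ge 3$: the required lantern can be embedded in the interior of $S$ away from all punctures and boundary components, so the relation $4t = 3t$ holds directly in $\mcg(S)$ regardless of the number of marked points or boundary circles.  Second, the phrase ``a chain relation in a collar subsurface'' is a slip---a collar is an annulus and supports no chain of curves.  What you want is a positive-genus one-holed subsurface bounded by the given boundary-parallel curve, inside which a chain of $2k$ non-separating curves gives $(T_{c_1}\cdots T_{c_{2k}})^{4k+2} = T_\partial$, exhibiting the boundary twist as a power of $t$.
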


A proof of this result can be found as Theorem (5.1) of \cite{kork}.

\begin{lemma}\label{lem:solvable}
If $G$ is a perfect group and $H$ is solvable then
any homomorphism $\phi: G \to H$ is trivial.
\end{lemma}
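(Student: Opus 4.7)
The plan is to observe that the image $\phi(G) \subseteq H$ is simultaneously perfect (as the homomorphic image of a perfect group) and solvable (as a subgroup of a solvable group), and then to note that any group with both of these properties must be trivial. This is a standard consequence of the definitions, so no serious obstacle arises.

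First I would verify that $\phi(G)$ is perfect: since $G = [G,G]$ and $\phi$ is a homomorphism, one has $\phi(G) = \phi([G,G]) = [\phi(G),\phi(G)]$, because $\phi$ sends commutators to commutators and is surjective onto its image. Next I would invoke the fact that any subgroup of a solvable group is solvable, so that the derived series
\[
\phi(G) \supseteq \phi(G)^{(1)} \supseteq \phi(G)^{(2)} \supseteq \cdots
\]
terminates at $\{1\}$ in finitely many steps. But perfectness of $\phi(G)$ gives $\phi(G)^{(k)} = \phi(G)$ for all $k \ge 0$ by induction, so the only way the derived series can reach $\{1\}$ is if $\phi(G) = \{1\}$ to begin with. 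Hence $\phi$ is trivial.
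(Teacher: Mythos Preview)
Your proof is correct and follows essentially the same idea as the paper's: both arguments exploit the derived series and the fact that a perfect group maps trivially into any abelian quotient. The only cosmetic difference is that the paper descends the derived series of $H$ to show $\phi(G)\subseteq H^{(k)}$ for all $k$, whereas you pass directly to $\phi(G)$ and observe that a group which is both perfect and solvable must be trivial.
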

\begin{proof}
Since $G$ is perfect any homomorphism from $G$ to an abelian
group is trivial.  Since $H$ is solvable we can inductively define $H_k$
by $H_0 = H$ and $H_{k} = [H_{k-1}, H_{k-1}]$ and then there exists
$n$ such that $H_n$ is the trivial group.  Since $H_k/H_{k+1}$
is abelian any homomorphism from $G$  to it is trivial.  Induction on
$k$  using the exact sequence $1 \to H_{k+1} \to H_k \to H_k/H_{k+1} \to 1$ shows that $\phi(G) \subset H_k$ for all $k$ and hence $\phi$
is trivial.
\end{proof}

The following  lemmas  are  well known; see Sections 1.3.2 -1.3.3 and 4.4.2-4.4.3 of   \cite{FM}.

\begin{lemma}  \label{change of coordinates} Let $S$ be an orientable surface of finite 
type (perhaps with boundary
and punctures) which has genus $g \ge 2.$  Then $\mcg(S)$ acts transitively on
\begin{enumerate} 
\item   the set of isotopy classes of non-separating simple closed curves in $S$.
\item  the set  $\cP$  of ordered pairs of disjoint simple closed curves in $S$ whose
union does not separate $S$.
\item  the set of ordered pairs of   simple closed curves in $S$ that intersect once.
\end{enumerate}
\end{lemma}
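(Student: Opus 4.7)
The plan is to prove each part by a direct application of the classification theorem for compact orientable surfaces, following the change-of-coordinates principle. In each case the strategy is to cut $S$ along representatives of the given curves, apply a homeomorphism of the cut surface produced by the classification, and reglue. Throughout, homeomorphisms of $S$ (and of any cut surface) must pointwise fix the original boundary and punctures of $S$, and the extra boundary components produced by cutting must be matched so that regluing is possible.

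For (1), given non-separating simple closed curves $a,b \subset S$, cut $S$ along each to obtain connected surfaces $S_a, S_b$. Both have genus $g-1$, the same original boundary and puncture data as $S$, and two additional boundary circles coming from the two sides of the cut. By the classification of compact orientable surfaces there is an orientation-preserving homeomorphism $S_a \to S_b$ fixing the original boundary and punctures pointwise and sending the new boundary circles of $S_a$ to those of $S_b$ compatibly with regluing; regluing yields a homeomorphism of $S$ taking $a$ to $b$. For (2), apply (1) to arrange $a_1 = b_1$ as sets. Cut $S$ along $a_1$ to obtain a connected surface $S'$ of genus $g-1$ in which the remaining curves descend to simple closed curves $a_2', b_2'$; non-separation of $a_1 \cup a_2$ in $S$ (respectively of $b_1 \cup b_2$) translates exactly to non-separation of $a_2'$ (respectively $b_2'$) in $S'$. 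Apply (1) inside $S'$, treating the two new boundary circles of $S'$ as part of the boundary to be fixed, to obtain a homeomorphism taking $a_2'$ to $b_2'$; regluing yields the desired mapping class of $S$. For (3), again reduce to $a_1 = b_1$. Because $a_1$ and $a_2$ meet in a single point, a regular neighborhood $N$ of $a_1 \cup a_2$ has Euler characteristic $-1$ and one boundary component, so $N$ is a one-holed torus, and likewise for a neighborhood $N'$ of $a_1 \cup b_2$. The complements $S \setminus N$ and $S \setminus N'$ agree in all their topological invariants, so the classification supplies a homeomorphism between them fixing the original boundary and punctures; a standard model of the one-holed torus provides a homeomorphism $N \to N'$ taking the ordered pair $(a_1,a_2)$ to $(a_1,b_2)$, and gluing completes the construction.

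The main obstacle is bookkeeping rather than mathematical depth: one must verify that in each step the homeomorphism extracted from the classification theorem can be chosen to pointwise fix the original boundary and punctures (not merely permute them), and that the two sides of a cut can be matched consistently so that regluing actually produces a self-homeomorphism of $S$. The latter occasionally requires a preliminary homeomorphism swapping the two boundary components obtained from a cut; such a swap can be realized whenever the cut surface has positive genus, which holds here since $g - 1 \ge 1$. Modulo these routine verifications the lemma reduces to the classification theorem, which is why it is typically stated without proof in the literature (see Sections 1.3.2--1.3.3 and 4.4.2--4.4.3 of \cite{FM}).
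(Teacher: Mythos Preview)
The paper does not actually prove this lemma; it simply declares it ``well known'' and cites Sections~1.3.2--1.3.3 and 4.4.2--4.4.3 of \cite{FM}. Your sketch is precisely the change-of-coordinates argument carried out in those sections of \cite{FM}, so your approach agrees with the paper's (by proxy) and is correct in outline. One small point worth tightening: in part~(2) you invoke part~(1) inside the cut surface $S'$, which has genus $g-1$; when $g=2$ this is a genus-one surface, so you are using the change-of-coordinates principle slightly beyond the stated hypothesis $g\ge 2$. This is harmless since your proof of (1) goes through verbatim for genus $\ge 1$, but it is worth saying so explicitly.
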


\begin{cor}\label{cor:pairs in P}
Suppose $S$ is as in the previous lemma and $\alpha$ and $\beta$
are simple closed curves in $S$.  If $\alpha$ and $\beta$
intersect transversely in a single point there is a simple
closed curve $\gamma$ such that both $(\alpha, \gamma)$
and $(\beta, \gamma)$ are in $\cP.$ Alternatively if $(\alpha, \beta) \in \cP$
then there exists a simple closed curve $\gamma$ which 
intersects both $\alpha$ and $\beta$ transversely in a single point.
\end{cor}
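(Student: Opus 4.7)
The plan is to deduce both assertions directly from the change of coordinates principle (Lemma~\ref{change of coordinates}). Since $\mcg(S)$ acts transitively on each of the configurations in question, it suffices to exhibit, on some fixed standard genus $g \ge 2$ surface, a single example of each situation in which the required $\gamma$ can be produced. To that end, fix a standard system of simple closed curves $a_1, b_1, a_2, b_2, \ldots, a_g, b_g$ in $S$, where $a_i$ intersects $b_i$ transversely in one point and each of these curves is disjoint from every curve of different index. Two handles are available because $g \ge 2$.

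For the first assertion, let $(\alpha, \beta)$ intersect transversely in one point. By Lemma~\ref{change of coordinates}(3) we may apply an element of $\mcg(S)$ so that $(\alpha, \beta) = (a_1, b_1)$. I would then take $\gamma := a_2$, which is disjoint from both $a_1$ and $b_1$. To see that $(a_1, a_2)$ and $(b_1, a_2)$ both lie in $\cP$, I would invoke the standard fact that if a disjoint pair of simple closed curves $\mu \cup \nu$ separates $S$, then with appropriate orientations $\mu$ and $\nu$ cobound a subsurface and hence satisfy $[\mu] = \pm [\nu]$ in $H_1(S;\Z)$; since $[a_1], [b_1], [a_2]$ are pairwise linearly independent, neither $a_1 \cup a_2$ nor $b_1 \cup a_2$ can separate.

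For the second assertion, suppose $(\alpha, \beta) \in \cP$. By Lemma~\ref{change of coordinates}(2) we may assume $(\alpha, \beta) = (a_1, a_2)$. Since $(a_1, a_2) \in \cP$, the complement $S \setminus (a_1 \cup a_2)$ is connected, and it contains both $b_1$ and $b_2$. I would choose an embedded arc $\delta$ in this complement from a point $p_1 \in b_1$ to a point $p_2 \in b_2$ that meets $b_1 \cup b_2$ only at its endpoints, and then let $\gamma$ be the oriented band sum of $b_1$ and $b_2$ along $\delta$. Then $\gamma$ is a simple closed curve in the class $[b_1] + [b_2]$. Because $b_1$ meets $a_1$ in one point and is disjoint from $a_2$, because $b_2$ meets $a_2$ in one point and is disjoint from $a_1$, and because $\delta$ avoids $a_1 \cup a_2$, the curve $\gamma$ meets each of $a_1$ and $a_2$ transversely in exactly one point.

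I do not foresee a substantive obstacle; the argument is a textbook application of change of coordinates together with elementary homology. The only point requiring a little care is ensuring in the band sum construction that $\gamma$ is a single embedded simple closed curve rather than two curves, and that the geometric (not merely algebraic) intersection of $\gamma$ with each $a_i$ equals one. Both are handled by taking $\delta$ transverse to $b_1 \cup b_2$ with interior disjoint from them and disjoint from $a_1 \cup a_2$, and by performing the band sum with orientations chosen so that the two half-arcs replacing $p_1, p_2$ join the ends of $b_1$ and $b_2$ into a single closed curve.
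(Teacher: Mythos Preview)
Your argument is essentially the paper's: both reduce via Lemma~\ref{change of coordinates} to exhibiting a single model configuration, and your explicit models (using $a_2$ for the first part and a band sum of $b_1,b_2$ for the second) are exactly the kind of ``straightforward construction'' the paper alludes to without detail. One small slip to fix: $b_1$ and $b_2$ are \emph{not} contained in $S\setminus(a_1\cup a_2)$ since $b_i$ meets $a_i$; what you actually need (and use) is only that the complement is connected so that an arc $\delta$ from a point of $b_1$ to a point of $b_2$ avoiding $a_1\cup a_2$ exists.
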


\begin{proof} It is straightforward to construct one example of
a pair $\alpha'$ and $\beta'$ which intersect in a single point
and a $\gamma'$ such that both $(\alpha', \gamma')$
and $(\beta', \gamma')$ are in $\cP.$  By Lemma~(\ref{change of coordinates})
there is a homeomorphism $h$ which takes $\alpha'$ to $\alpha$ and
$\beta'$ to $\beta$.  Letting $\gamma = h(\gamma')$ proves the first
assertion.

Similarly it is straightforward to construct one example of
a pair $(\alpha',\beta') \in \cP$ 
and a $\gamma'$ which intersects both $\alpha'$ and $\beta'$ transversely
in a single point. By Lemma~(\ref{change of coordinates})
there is a homeomorphism $g$ which takes $\alpha'$ to $\alpha$ and
$\beta'$ to $\beta$.  Letting $\gamma = g(\gamma')$ proves the 
second assertion.
\end{proof}

For an essential simple closed curve $\alpha$ in $S$, we denote the
{\em left Dehn twist about $\alpha$} by $T_\alpha.$ Note that by
Lemma~\ref{change of coordinates} (1), $T_\alpha$ is conjugate to
$T_\beta$ for all non-separating
simple closed curves $\alpha, \beta$ in $S$.

\begin{lemma} \label{generating set} Suppose $S$ is a surface
of finite type perhaps with boundary or punctures.  
There is a finite set $\cC_0$ of
non-separating simple closed curves in $S$ such that $\{T_\alpha:
\alpha \in \cC_0\}$ generates $\mcg(S)$ and such that for each
$\alpha, \beta \in \cC_0$, either 
\begin{enumerate}
\item  $(\alpha,\beta) \in \cP$, or
\item $\alpha$ and $\beta$ intersect transversely in a
single point, or
\item there is $\gamma \in \cC_0$ which intersects 
both $\alpha$ and $\beta$ transversely in a single point.
\end{enumerate}
\end{lemma}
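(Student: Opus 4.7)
The plan is to first produce a finite generating set for $\mcg(S)$ by Dehn twists about non-separating simple closed curves, and then enlarge it to enforce the trichotomy.

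For the initial set, invoke the classical Dehn--Lickorish--Humphries theorem, extended to surfaces with boundary or punctures by work of Birman, Johnson and others (Chapter 4 of \cite{FM}). This supplies a finite collection $\cC_0'$ of non-separating simple closed curves in $S$ whose Dehn twists generate $\mcg(S)$. For concreteness one may take a Humphries chain of $2g+1$ curves in a closed reference surface, augmented by standard boundary-parallel and puncture-encircling curves as needed.

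For the enlargement stage, loop through each pair $\alpha,\beta \in \cC_0'$ placed in minimal position. Either $\alpha\cap\beta=\emptyset$ and $\alpha\cup\beta$ is non-separating, in which case (1) holds; or $|\alpha\cap\beta|=1$, in which case (2) holds; or we are in the remaining case, namely $\alpha,\beta$ disjoint with $\alpha\cup\beta$ separating (equivalently $[\alpha]=\pm[\beta]$ in $H_1(S)$), or $|\alpha\cap\beta|\geq 2$. In the remaining case I would construct a non-separating simple closed curve $\gamma=\gamma(\alpha,\beta)$ meeting each of $\alpha,\beta$ transversely in exactly one point, and adjoin $\gamma$ to the set. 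In the disjoint-but-separating sub-case, cut $S$ along $\alpha\cup\beta$; each of the two resulting subsurfaces has boundary containing a copy of $\alpha$ and a copy of $\beta$, and joining a properly embedded arc between these on each side produces $\gamma$, which one checks is non-separating since it crosses each of $\alpha,\beta$ algebraically once. In the multiply-intersecting sub-case, use Lemma \ref{change of coordinates}(1) to move $\alpha$ to a standard curve in a model handle and read off $\gamma$ there.

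After one pass, each originally bad pair $(\alpha,\beta)$ satisfies (3) with witness $\gamma(\alpha,\beta)$, and the pairs $(\alpha,\gamma(\alpha,\beta))$, $(\beta,\gamma(\alpha,\beta))$ satisfy (2) by construction. Pairs involving two distinct adjoined curves, or an adjoined curve together with an old curve not used in its construction, could in principle still be bad and would be handled by a second pass of the same argument. The main obstacle is arranging that this iteration terminates. I would resolve this by fixing $\cC_0'$ to be the Humphries chain at the outset, whose pairwise intersection pattern is explicit: adjacent curves in the chain intersect once (so (2) holds), non-adjacent chain curves are disjoint with homologically independent classes (so their union is non-separating and (1) holds), and the only ``bad'' pairs come from the finitely many auxiliary boundary or puncture curves, for which the companion $\gamma$'s can be written down once and for all. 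This produces the desired finite $\cC_0$ in a single augmentation step.
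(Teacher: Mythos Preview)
The paper does not supply its own argument here: it simply refers the reader to Farb--Margalit, Figure~4.10 and the discussion following Corollary~4.16, i.e.\ to the explicit Humphries-type generating set, and leaves the verification of the trichotomy to inspection of that picture. Your final paragraph lands on exactly this: take the Humphries chain, note that adjacent curves satisfy (2), non-adjacent chain curves satisfy (1), and any remaining pairs arising from the extra generators for punctures or boundary can be handled by an explicit witness $\gamma$. That is the paper's approach, and it is correct.

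Two points about the scaffolding you build before getting there. First, the general enlargement procedure is not actually needed and, as written, has gaps: in the sub-case $|\alpha\cap\beta|\ge 2$ you propose to move $\alpha$ to a model handle via Lemma~\ref{change of coordinates}(1) and ``read off $\gamma$ there'', but this only controls the intersection of $\gamma$ with $\alpha$, not with $\beta$; there is no reason the resulting $\gamma$ meets $\beta$ exactly once. You also correctly identify, and do not really resolve, the termination issue for iterated enlargement. Since you abandon this route in favor of the explicit chain, these gaps are harmless, but the write-up would be cleaner if you went straight to the Humphries generators.

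Second, a small slip: you describe the auxiliary generators for boundary and punctures as ``boundary-parallel and puncture-encircling curves''. Curves of that type are peripheral, hence separating (or inessential), and cannot belong to a set $\cC_0$ of non-separating curves. The extra generators in the Farb--Margalit picture for surfaces with punctures or boundary are still non-separating simple closed curves; they interact with the punctures and boundary but are not isotopic into a neighborhood of them. This does not affect the logic of your final argument, but the description should be corrected.
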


For a proof of this result see the book of Farb and Margalit \cite{FM}
and, in particular, Figure 4.10 and the discussion after Corollary 4.16.
Note that the three possibilities are not mutually exclusive.
Indeed Corollary~(\ref{cor:pairs in P}) implies that if (1) holds there
is a simple closed curve $\gamma$ (not necessarily in $\cC_0$) 
which intersects both $\alpha$ and $\beta$ transversely in a single point.

\begin{prop}\label{prop:hom}  Suppose that 
$S$ is an orientable surface of finite type (perhaps with boundary
and punctures) with genus $g \ge 2$, and  
that $\phi: \mcg(S) \to G$ is a homomorphism to some group $G$.   
Suppose that one of the following holds:
\begin{enumerate}
\item There are disjoint simple closed curves $\alpha_0$ and $\beta_0$ 
in $S$ whose union does not separate (i.e. $(\alpha_0,\beta_0) \in \cP$)
 and $\phi(T_{\alpha_0}) = \phi(T_{\beta_0})^{\pm 1},$ or
\item  There are disjoint simple closed curves  $\gamma_0$ and $\delta_0$
in $S$ intersecting transversely in one
point such that $\phi(T_{\gamma_0})$ commutes with $\phi(T_{\delta_0}).$ 
\end{enumerate}
 Then the image of $\phi$ is a finite cyclic group.  If $g \ge 3$ then
$\phi$ is trivial.
\end{prop}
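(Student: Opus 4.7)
\emph{Proof plan.}
My plan is to reduce the proposition to Proposition~\ref{prop:homology} by establishing that, under either hypothesis, the image of $\phi$ is abelian. Once this is done, the image factors through the abelianization of $\mcg(S)$, which is trivial for $g \ge 3$ and cyclic of order $10$ for $g = 2$; in either case the image is finite cyclic, and trivial when $g \ge 3$. To prove abelianness it suffices, by Lemma~\ref{generating set}, to check that $\phi(T_\alpha)$ commutes with $\phi(T_\beta)$ for every pair $\alpha, \beta$ in the generating set $\cC_0$.

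Under hypothesis (2), I first combine the braid relation $T_{\gamma_0} T_{\delta_0} T_{\gamma_0} = T_{\delta_0} T_{\gamma_0} T_{\delta_0}$ with the assumed commutativity $\phi(T_{\gamma_0})\phi(T_{\delta_0}) = \phi(T_{\delta_0})\phi(T_{\gamma_0})$ to deduce $\phi(T_{\gamma_0}) = \phi(T_{\delta_0})$. By Lemma~\ref{change of coordinates}(3) every pair of curves intersecting transversely once is conjugate to $(\gamma_0, \delta_0)$ in $\mcg(S)$, so conjugating by the appropriate element propagates this equality to $\phi(T_\alpha) = \phi(T_\beta)$ whenever $\alpha, \beta$ intersect once.

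Under hypothesis (1), Lemma~\ref{change of coordinates}(2) similarly propagates the relation to $\phi(T_\alpha) = \phi(T_\beta)^\epsilon$ for all $(\alpha, \beta) \in \cP$, with fixed sign $\epsilon = \pm 1$ (independent of the chosen pair, by conjugation invariance). For $\alpha, \beta$ intersecting once, Corollary~\ref{cor:pairs in P} supplies $\gamma$ with $(\alpha, \gamma), (\beta, \gamma) \in \cP$; then $\phi(T_\alpha) = \phi(T_\gamma)^\epsilon = \phi(T_\beta)$, and the sign drops out.

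Finally I examine the three adjacency types of Lemma~\ref{generating set} for $\alpha, \beta \in \cC_0$: if $(\alpha, \beta) \in \cP$ the Dehn twists already commute in $\mcg(S)$; if $\alpha \cap \beta$ is a single transverse point then by the preceding paragraphs $\phi(T_\alpha) = \phi(T_\beta)$; and if there is a common $\gamma \in \cC_0$ intersecting each once, then $\phi(T_\alpha) = \phi(T_\gamma) = \phi(T_\beta)$. In all three cases $\phi(T_\alpha)$ and $\phi(T_\beta)$ commute, so the image is abelian and Proposition~\ref{prop:homology} finishes the argument. I anticipate no serious obstacle; the most delicate point is the sign case $\epsilon = -1$ in hypothesis (1), handled by the observation that the auxiliary $\gamma$ from Corollary~\ref{cor:pairs in P} makes $\phi(T_\gamma)^\epsilon$ appear identically on both sides so that the sign cancels.
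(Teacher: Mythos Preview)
Your proposal is correct and follows essentially the same route as the paper's proof: propagate the given relation to all relevant pairs of curves via the change-of-coordinates lemma, verify that the images of the Dehn-twist generators from Lemma~\ref{generating set} pairwise commute, and then invoke Proposition~\ref{prop:homology}. Your treatment is slightly sharper in two respects --- you track the sign $\epsilon$ in case~(1) to get $\phi(T_\alpha)=\phi(T_\beta)$ (rather than merely $\phi(T_\beta)^{\pm1}$) for once-intersecting pairs, and you invoke the braid relation in case~(2) to upgrade commutation to equality --- but the paper's proof works with the looser conclusion $\phi(T_\alpha)=\phi(T_\beta)^{\pm1}$ (yielding a cyclic image directly) and propagates commutation without passing through the braid relation; these are cosmetic differences, not substantive ones.
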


\begin{proof} We first prove (1). Let $\cC_0$ be
the set of simple closed curves described in 
Lemma~(\ref{generating set}), so Dehn twists
around elements of $\cC_0$ form a set of generators
of $\mcg(S).$ 
We want to show that Dehn twists around any two elements
of $\cC_0$ have the same image under $\phi$ or images which are
inverses in $G$.  We suppose 
$\alpha, \beta \in \cC_0$ and consider the three possibilities enumerated in 
Lemma~(\ref{generating set}).

If $(\alpha,\beta) \in \cP$ then by Lemma~\ref{change of
  coordinates} (2) there exists $\bar f \in \mcg(S)$ such that
$T_{\alpha} = \bar f T_{\alpha_0} \bar f^{-1}$ and $T_{\beta} = \bar f
T_{\beta_0} \bar f^{-1}$.  Thus
\begin{align*}
\phi(T_{\alpha}) &= \phi(\bar f T_{\alpha_0} \bar f^{-1})\\
&= \phi(\bar f) \phi( T_{\alpha_0}) \phi(\bar f^{-1})\\
&= \phi(\bar f) \phi( T_{\beta_0})^{\pm 1} \phi(\bar f^{-1})\\
&= \phi(\bar f T_{\beta_0} \bar f^{-1})^{\pm 1}\\
&= \phi(T_{\beta})^{\pm 1}.
\end{align*}
This proves that $\phi(T_\alpha) = \phi(T_\beta)^{\pm 1}$
when $(\alpha,\beta) \in \cP$.  
Note we used only the fact that $(\alpha,\beta) \in \cP$ 
and not the fact that $\alpha,\beta \in \cC_0.$

Next suppose that $\alpha$ and $\beta$ intersect in a single point.
By Corollary~(\ref{cor:pairs in P}) there exists a simple closed
curve $\delta$ (not necessarily in $\cC_0$)
such that $(\alpha, \delta), (\delta,\beta) \in \cP$.  Thus by
what we showed above, $\phi(T_\alpha)
= \phi(T_\delta)^{\pm 1}$ and $\phi(T_\beta) = \phi(T_\delta)^{\pm 1}$
so we have the desired result in
case (2) of  Lemma~\ref{generating set}.

Finally for case (3) of Lemma~\ref{generating set},
suppose there exists $\gamma \in \cC_0$ which intersects
both $\alpha$ and $\beta$ in a single point.  Then by what we have
just shown $\phi(T_\alpha) = \phi(T_\gamma)^{\pm 1}$ and
$\phi(T_\beta) = \phi(T_\gamma)^{\pm 1}.$

Hence in all three possibilities enumerated in Lemma~\ref{generating
set} we have $\phi(T_\alpha) = \phi(T_\beta)^{\pm 1}$.  Since $\cC_0$ is a
generating set, the image of $\phi$ is contained in a cyclic subgroup
of $G$. Hence $\phi$ factors through the abelianization
of $\mcg(S)$ and the result follows from Proposition~(\ref{prop:homology}).
This proves (1).

To prove (2) we observe that there are generators of $\mcg(S)$
consisting of Dehn twists about non-separating simple closed curves
each pair of which are either disjoint or intersect transversely in
a single point (see Corollary 4.16 and ff. of \cite{FM}).  It
suffices to show that the $\phi$-images of these generators commute,
for then $\phi(\mcg(S))$ is abelian 
and $\phi$ factors through the abelianization
of $\mcg(S)$ so the result follows from Proposition~(\ref{prop:homology}).

It is obvious that Dehn twists
about disjoint curves commute, so it suffices to show that if $\gamma$ and
$\delta$ intersect transversely in a single point then
$\phi(T_\gamma)$ and $\phi(T_\delta)$ commute.  By hypothesis this is
true for one pair $\gamma_0$ and $\delta_0$ which intersect in a
single point. By Lemma~\ref{change of coordinates} (3) there exists
$\bar f \in \mcg(S)$ such that $T_{\gamma} = \bar f T_{\gamma_0}
\bar f^{-1}$ and $T_{\delta} = \bar f T_{\delta_0} \bar f^{-1}$. From
this is follows easily that $\phi(T_{\gamma})$ and $\phi(T_{\delta})$
commute.
\end{proof}

\begin{remark} 
Suppose $\gamma$ and $\delta$ are simple closed
curves which intersect transversely in one point. Then
a standard relation in the mapping class group (see Proposition (3.16)
of \cite{FM}) says
\[
T_\gamma T_\delta T_\gamma =  T_\delta T_\gamma T_\delta.
\]
From this it is clear that the hypothesis of part (2) 
of Proposition~(\ref{prop:hom}),
that $\phi(T_\gamma)$ and $\phi(T_\delta)$ commute,
is equivalent to assuming $\phi(T_\gamma) = \phi(T_\delta)$.
\end{remark}

\section{Linear Representations of $\mcg(S)$}

In this section we prove  the first of our main theorems. 

If $\lambda \in \C$ is an eigenvalue of $L\in GL(n,\C)$ then
\[
\ker(L -\lambda I) \subset \ker(L -\lambda I)^2 \subset \dots
\ker(L -\lambda I)^k
\]
is an increasing sequence of   subspaces beginining with the {\em  eigenspace} $\ker(L -\lambda I)$ and ending
with the {\em generalized eigenspace} $ \ker(L -\lambda I)^k$
for $k$ large.   If $L'$ commutes with $L$ then $L'$ preserves each $\ker(L -\lambda I)^j$.

\bigskip

\noindent
{\bf Theorem~\ref{thm:lin-rep}.}
{\em   Suppose that 
 $S$ is a genus $g\ge 1$ surface of finite type (perhaps with boundary
and punctures), that $n < 2g$  and that  $\phi : \mcg(S) \to GL(n,\C)$ is a
homomorphism.   Then the image $H$ of $\phi$ is abelian.  If $g = 2$ then $H$ is finite cyclic; if $g \ge 3$ then $H$ is trivial.    }

\bigskip

\begin{proof}   It suffices to show that $H$ is abelian because in that case  $\phi$
factors through the abelianization of $\mcg(S)$ which is  trivial if $g \ge 3$ and $\Z/10\Z$ if $g = 2$
by  Proposition~(\ref{prop:homology}).

  The proof that $H$ is abelian is by double induction, first on $g$ and then on $n$.     Since $GL(1,\C)$ is abelian the theorem is obvious if $n=1$ and so also if $g=1$.    It therefore suffices to prove the theorem assuming that $g \ge 2$ and that the following inductive hypothesis holds :   If $N$  is a genus $g'\ge 1$ surface of finite type (perhaps with boundary and punctures) and if  either $g'<g$  and  $m <2g'$ or if $g' = g$ and $m < n<2g$ then the image of any homomorphism $\mcg(N) \to GL(m,\C)$ is abelian.

Let $\alpha$ and $\beta$ be non-separating simple closed curves in $S$
that intersect transversely in one point, let $M = M(\alpha,\beta)$ be
a genus one subsurface which is a regular neighborhood of $\alpha \cup
\beta$ and let $S' = S'(\alpha,\beta)$ 
be the genus $g_0 = g-1$ subsurface which is the
closure of $S \setminus M.$ We consider $ \mcg(S')$
to be a subgroup of $\mcg(S)$ via the natural embedding.
Note that $\mcg(S')$ lies in  the
intersection of the centralizers of $T_\alpha$ and $T_\beta$.    Letting $L_\alpha = \phi(T_\alpha)$, $L_\beta = \phi(T_\beta)$ and $H' =  \phi(\mcg(S'))$,  every element of $H'$ commutes with  both $L_\alpha$ and $L_\beta$.


\begin{sublemma}  \label{invariant subspaces} Assume notation  and inductive hypothesis as above.  If at least one of the following conditions is satisfied then $H$ is abelian.
\begin{enumerate}
\item  $\C^n = V_1 \oplus V_2$ where $V_i$ is  an $H'$-invariant
subspace with dimension at most $n-2 $.
\item   $L_\alpha$ has a unique eigenvalue and   there exists an $H'$-invariant subspace $V$  of
$\C^n$ with dimension $2 \le d \le n-2$. 
\item  $L_\alpha$ has a unique eigenvalue and  there is an $H$-invariant subspace  $V$ of $\C^n$ with dimension  $1 \le d \le n-1$. 
\end{enumerate}
\end{sublemma}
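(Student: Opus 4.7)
The plan is to handle the three cases separately, each time using the inductive hypothesis to force abelianness on an appropriate restriction of $H$ or $H'$, and then invoking Proposition~\ref{prop:hom}(2) to conclude that $H$ itself is abelian.

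For case (1), since $\dim V_i \leq n-2 < 2(g-1) = 2g_0$ and $g_0 = g-1 < g$, the inductive hypothesis applied to $\mcg(S')$ acting on each $V_i$ gives that $H'|_{V_i}$ is abelian. Because $\C^n = V_1 \oplus V_2$ is a direct sum of $H'$-invariant subspaces, $H'$ is itself abelian. Since $g_0 \geq 1$, one may choose simple closed curves $\gamma, \delta \subset S'$ that intersect transversely in one point and are non-separating in $S'$ (hence non-separating in $S$). Their Dehn twist images lie in the abelian group $H'$ and so commute, and Proposition~\ref{prop:hom}(2) concludes.

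For case (3), the inductive hypothesis with $g' = g$ and $m < n$ applied to $\mcg(S) \to GL(V)$ and $\mcg(S) \to GL(\C^n/V)$ yields that $H|_V$ and $H|_{\C^n/V}$ are abelian; as quotients of $\mcg(S)^{ab}$, which by Proposition~\ref{prop:homology} is trivial for $g \geq 3$ and $\Z/10\Z$ for $g = 2$, each is finite cyclic. By Lemma~\ref{change of coordinates}(1) all non-separating Dehn twists are conjugate in $\mcg(S)$, and conjugate elements of an abelian group coincide, so their common image on $V$ must equal $L_\alpha|_V$; this is a finite-order linear map with unique eigenvalue $\lambda$, hence equals $\lambda I_V$. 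The same argument gives $L_\alpha|_{\C^n/V} = \lambda I_{\C^n/V}$. Consequently every $h \in H$ has in a basis adapted to $V$ the block form
\[
h = \begin{pmatrix} \lambda^{k(h)} I_d & B(h) \\ 0 & \lambda^{k(h)} I_{n-d} \end{pmatrix},
\]
with the same exponent on both diagonal blocks because this holds for every non-separating Dehn twist and these generate $\mcg(S)$ by Lemma~\ref{generating set}. Two such matrices commute by direct computation, so $H$ is abelian.

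For case (2), note that the hypothesis $2 \leq d \leq n-2$ forces $n \geq 4$ and so is vacuous when $g = 2$ (since then $n \leq 3$); one may therefore assume $g_0 = g-1 \geq 2$. The inductive hypothesis for $\mcg(S')$ now applies, giving $H'|_V$ and $H'|_{\C^n/V}$ abelian. For any non-separating $\gamma \subset S'$, the map $L_\gamma \in H'$ preserves $V$, and being conjugate in $H$ to $L_\alpha$ inherits its unique eigenvalue $\lambda$; hence $L_\gamma|_V$ has unique eigenvalue $\lambda$. As in case (3), finiteness of $\mcg(S')^{ab}$ (Proposition~\ref{prop:homology}, using $g_0 \geq 2$) makes $H'|_V$ finite cyclic, forcing $L_\gamma|_V = \lambda I_V$ and similarly $L_\gamma|_{\C^n/V} = \lambda I_{\C^n/V}$. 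Every element of $H'$ then has the block form displayed above, $H'$ is abelian, and Proposition~\ref{prop:hom}(2) applied to a pair $\gamma, \delta \subset S'$ intersecting transversely in one point concludes. The main obstacle here is precisely the coordination of the two diagonal blocks, which relies on combining the unique-eigenvalue hypothesis on $L_\alpha$ with the conjugacy of all non-separating Dehn twists.
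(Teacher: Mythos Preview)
Your proof is correct and follows essentially the same approach as the paper's: in each case you apply the inductive hypothesis to the restricted/quotient representations to get abelianness, use finiteness of the abelianization (Proposition~\ref{prop:homology}) together with the unique-eigenvalue hypothesis to force the generators to act as $\lambda I$ on both diagonal blocks, and then observe that such block-upper-triangular matrices commute. The only cosmetic differences are that you treat case (3) before case (2) and phrase the conclusion in terms of all elements of $H$ having matched scalar diagonals rather than just the generators, but the content is the same.
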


\proof   
It suffices to show that $H'$ is abelian because
then there exist simple closed curves $\gamma, \delta$ in 
$S'$ with  $\phi(T_\gamma)$ and $\phi(T_\delta)$ commuting and  $H$ is abelian by Proposition~(\ref{prop:hom})(2).

For case (1), identify $\Aut(V_i)$ with $GL(d_i,\C)$ where
$d_i$ is the dimension of $V_i$ and let $\phi_i : \mcg(S')
\to GL(d_i,\C)$ be the homomorphism induced by $\phi$
restricted to $V_i$.  Since $n-2 < 2g -2 = 2g_0$, the
inductive hypothesis implies that the image of $\phi_i$,
and hence $H'$, is abelian.

For case (2) let $\lambda$ be the  unique eigenvalue
$L_\alpha$ and note that $g_0 \ge 2$ because $2g_0 = 2g-2 > n-2 \ge 2$.  Identify $\Aut(V)$ with
$GL(d,\C)$ and let $\phi_1 : \mcg(S') \to GL(d,\C)$ be the
homomorphism induced by $\phi$. Similarly, identify $\Aut(\C^n/V)$
with $GL(n-d,\C)$ and let $\phi_2 : \mcg(S') \to GL(n-d,\C)$ be the
homomorphism induced by $\phi$.  Since $d,n-d \le n-2 = 2g_0 $, the
inductive hypothesis implies that $\phi_1(\eta)$ and $\phi_2(\eta)$
have finite order, and hence (considering the Jordan canonical form)
are diagonalizable, for all $\eta \in \mcg(S')$.  If $\phi(\eta)$ is
conjugate to $L_\alpha$ then $\lambda$ is its unique eigenvalue so
$\phi_d(\eta) = \lambda I_d$ and $\phi_{n-d}(\eta) = \lambda I_{n-d}$.
In other words, there is a basis for $\C^n$, with respect to which any
such $\phi(\eta)$ is represented by a matrix that differs from
$\lambda I$ only on the on upper right $d \times (n-d)$ block.  It is
easy to check that any two such matrices commute.    Since $\mcg(S')$
has a generating set consisting of elements that are conjugate to $T_\alpha$, $H'$ has a generating set consisting of elements that are conjugate to $L_\alpha$ and $H'$ is abelian.

The argument for case (3) is the same as that for case (2)
except that it is applied directly to $H$ and not $H'$.  We
can apply this to dimension $1$ and $n-1$ because $S$ has genus
$g$ and not $g-1$.
\endproof 

WIth Sublemma~\ref{invariant subspaces} in hand we prove Theorem~\ref{thm:lin-rep} by a case analysis.

\bigskip

\noindent{\bf Case 1:  ($L_\alpha$ has three distinct eigenvalues or has two distinct eigenvalues whose generalized eigenspaces do not have dimension $1$ or $n-1$.)}     \ \ \    Since each generalized eigenspace of $L_\alpha$ is $H'$-invariant and since $\C^n$ is the product of the generalized eigenspaces of $L_\alpha$, this case follows from Subemma~\ref{invariant subspaces}(1).

\bigskip
 
\noindent{\bf Case 2:  ($L_\alpha$ has a unique eigenvalue $\lambda$)}   \ \ \   The proof in this case requires a subcase analysis depending on the dimension $r$ of the eigenspace $W_\alpha$.  We will make use of the fact that $L_\beta$ is conjugate to $L_\alpha$ (and so has   unique eigenvalue $\lambda$ and   eigenspace $W_\beta$) and that both  $W_\alpha$ and $W_\beta$ are $H'$-invariant.

\bigskip

\noindent{\bf Case 2A:   ($r \ne 1, n-1$)}  \ \ \    If $r =n$ then   $L_\alpha = \lambda I$.  Since $H$ is generated by conjugates of $L_\alpha$, each of which is multiplication by a scalar, it is abelian.  If  $2 \le r \le n-2$ then  Sublemma~\ref{invariant subspaces}(2)  applied to $V= W_\alpha$ implies that $H$ is abelian.


\bigskip

 \noindent{\bf Case 2B:  (
 $W_\alpha= W_\beta$)}  \ \ \ 
  For any pair $\alpha', \beta'$ of non-separating 
simple closed curves intersecting in a point, 
$W_{\alpha'} = W_{\beta'}$ by Lemma~(\ref{change of coordinates}) (3).  Corollary~\ref{cor:pairs in P} implies that if a non-separating simple
closed curve $\gamma$ is disjoint from $\alpha$ then there is a
non-separating simple closed curve $\delta$ that intersects both
$\alpha$ and $\gamma$ in a single point.  Applying the above result
with the pair $(\alpha, \beta)$ replaced first by $(\alpha, \delta)$
and then by $(\delta, \gamma)$ we conclude that $W_\alpha = W_\delta =
W_\gamma$.  It follows that $V = W_\alpha$ is an eigenspace for a
generating set of $H $ and in particular is $H$-invariant.  Sublemma~\ref{invariant subspaces}(3) completes the proof.

\bigskip

\noindent{\bf Case 2C:  ($r =1$ and $W_\alpha\ne W_\beta$)}  \ \ \  
    If $n \ge 4$, then  Sublemma~\ref{invariant subspaces}(2) applied to $V= W_\alpha \oplus W_\beta$ completes the proof.    If $n=2$, then  $\C^2 = W_\alpha \times W_\beta$.  Since $W_\alpha$ and $W_\beta$ are $H'$-invariant, there is a basis of $\C^2$ with respect to which each element of $H'$ is diagonal so $H'$ is abelian.   
    
    Suppose then that $n=3$ and hence that  there is a basis for $\C^3$ with respect to which $L_\alpha$ has matrix $\lambda I +N$ where
$N$ is the $3 \times 3$ matrix equal to $E_{12}+ E_{23}$ where
$E_{ij}$ is the matrix all of whose entries are $0$ except the
$ij^{th}$ which is $1.$
The subspaces $\ker(L -\lambda I)
\subset \ker(L -\lambda I)^2 \subset \ker(L -\lambda I)^3 = \C^3$
are all distinct and $H'$-invariant.   
The matrices for elements of $H'$ are therefore upper
triangular in the same basis for which the matrix of $L$
is $\lambda I +N$.  There are generators of $H'$ that are conjugate to $L_\alpha$ and so have unique eigenvalue $\lambda$.   The diagonal entries for the  matrices for these generators 
are all $\lambda.$  A straightforward computation using the
fact that these generators commute with $L_\alpha$ shows that
their matrices have the form $\lambda I + aN + bN^2$ but the
group generated by matrices of this form is abelian since all
elements are polynomials in $N$.  This proves that $H'$ is abelian.

\bigskip

\noindent{\bf Case 2D:  ($r =n-1$ and $W_\alpha\ne W_\beta$)}  \ \ \       If $n \ge 4$, then  Sublemma~\ref{invariant subspaces}(2) applied to $V= W_\alpha \cap W_\beta$ completes the proof.    If $n =2$ then $r=1$ and case 2C applies.    

Suppose then that $n=3$ and $r=2$.

The image $U_\alpha = (L_\alpha -\lambda I)(\C^3)$ of
$L_\alpha - \lambda I$ is contained in $W_\alpha =
\ker(L_\alpha -\lambda I)$ because $(L_\alpha -\lambda I)^2
= 0$; since $W_\alpha$ has dimension $2$, $U_\alpha$ has
dimension $1$.  Moreover, if $W$ is any two-dimensional
$L_\alpha$-invariant subspace of $\C^3$ then $(L_\alpha
-\lambda I)(W) \subset W$ and also $(L_\alpha -\lambda I)(W)
\subset U_\alpha.$ Indeed $(L_\alpha -\lambda I)(W) =
U_\alpha$ unless $W = W_\alpha.$ Hence $U_\alpha$ is a
one-dimensional subspace of every $L_\alpha$-invariant
two-dimensional subspace of $\C^3.$ We conclude that if $W_1
\ne W_2$ are two-dimensional $L_\alpha$-invariant subspaces
then $U_\alpha = W_1 \cap W_2.$
  
Suppose that $\alpha'$ is a simple closed non-separating
curve in $S$.  If $\alpha'$ is disjoint from $\alpha$ then
$L_{\alpha'} = \phi(T_{\alpha'})$ commutes with $L_\alpha$
and so preserves $U_\alpha$.  We claim that the same is true
if $\alpha'$ intersects $\alpha$ in a single point.  By
Lemma~\ref{change of coordinates}(3) we may assume that
$\alpha' = \beta$.  Choose simple closed non-separating
curves $\gamma, \delta \subset S'$ that intersect in a
single point and let $W_\gamma$ and $W_\delta$ be the unique
eigenspaces for $L_\delta = \phi(T_\delta)$ and $L_\gamma =
\phi(T_\delta)$.  Both $W_\gamma$ and $W_\delta$ are
$L_\alpha$-invariant.  Lemma~(\ref{change of coordinates})
(3) implies that $W_\gamma \ne W_\delta$ and hence that
$U_\alpha = W_\gamma \cap W_\delta$.  For the same reason,
$U_\beta= W_\gamma \cap W_\delta$.  This proves that $
U_\beta=U_\alpha$ and hence that $U_\beta$ is
$L_\alpha$-invariant as claimed.
        
        We have now shown that there is a generating set for $H$ that preserve $U_\alpha$ so Sublemma~\ref{invariant subspaces}(3) completes the proof.
        
\bigskip

\noindent{\bf Case 3  ($L_\alpha$ has two eigenvalues; the dimensions of the generalized eigenspaces are $1$ and $n-1$)}   \ \ \

Let $U_\alpha$ and $W_\alpha$ be the dimension $1$ and dimension $n-1$  generalized eigenspaces of $L_\alpha$ respectively; define   $U_\beta$ and $W_\beta$ similarly.  All
of these spaces are $H'$-invariant.  The argument given for case 2B applies in this context  if either
$U_\alpha = U_\beta$ or if $W_\alpha = W_\beta$.  We may therefore assume that  $U_\alpha \ne U_\beta$ and $W_\alpha \ne W_\beta$.   Corollary~\ref{cor:pairs in P}  and Lemma~\ref{change of coordinates} (2) imply that if   $\gamma$ is a  non-separating simple closed curve that is disjoint from $\alpha$ and whose union with $\alpha$    does not separate $S$ then      $U_\alpha \ne U_\gamma$  and $W_\alpha \ne W_\gamma$ (where $U_\gamma$ and $W_\gamma$ are defined in the obvious way.)

\bigskip
\noindent{\bf Case 3A ($n=2$ or $3$)} \ \ \ 
If $n=2$, then   $U_\alpha$ and $W_\alpha$ are one dimensional and $H'$-invariant so there is a basis with respect to which the matrix for  each element of $H'$ is diagonal.  Thus  $H'$ is abelian  and we are done.  
 
If $n = 3$, choose simple closed non-separating curves
$\gamma,\delta \subset S'$ that intersect in a point.  Since
$U_\alpha$ is a one dimensional $L_\gamma$-invariant
subspace that is not equal to $U_\gamma$ it must be
contained in $W_\gamma$.  The same argument implies that
$U_\beta \subset W_\gamma$ and hence that $W_\gamma$ is
spanned by $U_\alpha$ and $U_\beta$.  For the same reason,
$W_\delta$ is spanned by $U_\alpha$ and $U_\beta$.  But then
$W_\gamma = W_\delta$ in contradiction to Lemma~\ref{change
  of coordinates} (3).

\bigskip
   
\noindent{\bf Case 3B ($g > 3, \ n \ge 4$)} \ \ \ 
If $g > 3$ then the induction hypothesis implies that $H'$
acts trivially on $W_\alpha \cap W_\beta$.  Hence if $\gamma$
is a non-separating simple closed curve in $S'$ the eigenvalue
of $L_\gamma$ with multiplicity $(n-1)$ must be $1$ since
the dimension of $W_\alpha \cap W_\beta$ is $> 1.$ 
Therefore the eigenvalue of $L_\alpha$ corresponding to the
eigenspace $W_\alpha$ is $1$
and the determinant of $L_\alpha$ is the eigenvalue
corresponding to $U_\alpha$; in particular the deteminant of
$L_\alpha$ is not $1$.  But postcomposing $\phi$ with the
determinant defines a homomorphism from $\mcg(S)$ to $\C^*$
which must be trivial.  This contradiction shows that this
case can not happen.

\bigskip

\noindent{\bf Case 3C ($g \le 3, n=4$ or $5$)}    \ \ \  We may assume that $g = 3$ because $2g > n$.  Varying our notation slightly,  we choose three pairs of simple
closed curves $(\alpha_i, \beta_i),\ 1\le i\le 3$, such that
$\alpha_i$ and $\beta_i$ intersect transversely in a single point
and $\alpha_i \cup \beta_i$ is disjoint from $\alpha_j \cup \beta_j$ 
for $i \ne j$ and such that no two from among these six curves
separate $S$.

We apply the following observation twice: if $\gamma$ and $\delta$ are disjoint simple closed
curves whose union does not separate then   $U_\gamma \subset
W_\delta$.  This follows from the fact that  $L_\gamma$ and $L_\delta$ commute and   $U_\gamma \ne U_\delta$ is $L_\delta$-invariant  and so is  contained
in an eigenspace of  $L_\delta.$   

By hypothesis,   $U_{\alpha_1}$ and $U_{\beta_1}$ span a 
two-dimensional subspace of $\C^n$.    By the  above observation,
$U_{\alpha_1}, U_{\beta_1} \subset W_{\alpha_2}$ and
hence the span $V_0$ of $U_{\alpha_1}, U_{\beta_1}$ and $U_{\alpha_2}$
is three dimensional.  

Applying the observation above a second time we note that
the span of $U_{\alpha_1}, U_{\beta_1}$ and $U_{\alpha_2}$
is contained in both $W_{\alpha_3}$ and $W_{\beta_3}$  and
hence in their intersection. We conclude that the dimension
of $W_{\alpha_3} \cap W_{\beta_3}$ is three since $n \le 5$ 
and $W_{\alpha_3} \ne W_{\beta_3}$ implies it cannot be greater
than three.  Therefore $V_0 = W_{\alpha_3} \cap W_{\beta_3}$.

It is now straightforward to extend the six curves above to 
a set of non-separating simple closed curves such that Dehn
twists about these curves form a generating set for $\mcg(S)$
and each of these curves is either disjoint from 
$\alpha_1 \cup \beta_1 \cup\alpha_2$ or disjoint
from $\alpha_3 \cup \beta_3$ (see Corollary 4.16 and ff. of \cite{FM}).
It follows from the two descriptions of $V_0$ (as 
$W_{\alpha_3} \cap W_{\beta_3}$ and the span of 
of $U_{\alpha_1}, U_{\beta_1}$ and $U_{\alpha_2}$)
that if $\gamma$ is any one of these generators then
$V_0$ is $L_\gamma$-invariant.  Hence $V_0 \subset \C^5$
is $H$ invariant and Sublemma~\ref{invariant subspaces}(3) completes the proof.  \end{proof}

\section{Homomorphisms to  $\Homeo(\T^2)$.}\label{sec:torus}

\begin{lemma}\label{lem:abelian}
If $G$ is a finite group which acts 
on $\T^2$ by homeomorphisms isotopic to the identity,
then the action factors through an abelian group which acts
freely on $\T^2$. 
\end{lemma}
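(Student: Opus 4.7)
The plan is to prove two things in sequence: first that the image $H$ of the action in $\Homeo(\T^2)$ acts freely on $\T^2$, and then that $H$ is abelian. Since the $G$-action factors through $H$, this suffices.

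For freeness, I would argue by contradiction. Suppose some non-trivial $h \in H$ fixes a point $p \in \T^2$. Choose a lift $\tilde p \in \R^2$ of $p$ and the unique lift $\tilde h \colon \R^2 \to \R^2$ of $h$ with $\tilde h(\tilde p) = \tilde p$. Because $h$ is isotopic to the identity, $\tilde h$ commutes with every deck translation $T_v$, $v \in \Z^2$. Letting $n$ denote the order of $h$, the map $\tilde h^n$ is a deck translation; evaluating at $\tilde p$ shows $\tilde h^n = \mathrm{id}$. Thus $\tilde h$ is a non-trivial finite-order orientation-preserving homeomorphism of $\R^2$. By a classical theorem of Brouwer, such a map is topologically conjugate to a non-trivial planar rotation and in particular has a unique fixed point $\tilde q$. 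But commutativity with $T_v$ forces $T_v(\tilde q)$ to also be a fixed point of $\tilde h$, yielding $T_v(\tilde q) = \tilde q$ for every $v \in \Z^2$, which is absurd.

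For abelianness, I would pass to the quotient $\T^2/H$. Freeness together with the finiteness of $H$ makes the projection $\T^2 \to \T^2/H$ a regular covering with deck group $H$. Since every element of $H$ is orientation-preserving (being isotopic to the identity), $\T^2/H$ is an orientable closed surface, and multiplicativity of the Euler characteristic gives $\chi(\T^2/H) = 0$, forcing $\T^2/H \cong \T^2$. The covering then yields a short exact sequence
\[
1 \to \pi_1(\T^2) \to \pi_1(\T^2/H) \to H \to 1,
\]
that is, $1 \to \Z^2 \to \Z^2 \to H \to 1$, exhibiting $H$ as a quotient of $\Z^2$ and therefore abelian.

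The main obstacle is the appeal to Brouwer's theorem on finite-order planar homeomorphisms; once that is granted, the remainder is routine covering-space theory. Two points worth checking along the way are (i) the existence and uniqueness of a lift $\tilde h$ fixing a prescribed preimage $\tilde p$, which is standard because $h$ is isotopic to the identity, and (ii) the orientability of $\T^2/H$, which does genuinely use the hypothesis in order to rule out the Klein bottle as a possible quotient.
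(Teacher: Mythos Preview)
Your proof is correct and follows essentially the same two-step structure as the paper: freeness first, then abelianness via the quotient surface and Euler characteristic. The only differences are in presentation: where the paper simply cites the fact that a finite-order homeomorphism of $\T^2$ isotopic to the identity with a fixed point must be the identity (referencing \cite{Hartzman} and \cite{Brouwer}), you unpack this by lifting to $\R^2$ and invoking Brouwer's planar theorem directly; and you explicitly verify orientability of the quotient to exclude the Klein bottle, a point the paper leaves implicit.
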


\begin{proof}
Suppose $\phi: G \to \Homeo(\T^2)$ is a
homomorphism.
If $g \in G$ and 
$\phi(g) \in \Homeo(\T^2)$ has a fixed point then
$\phi(g)=id,$ since every finite order homeomorphism of the
torus which is isotopic to the identity and has a fixed
point must be the identity (see \cite{Hartzman} or \cite{Brouwer}).
It follows that if $A = G/ker(\phi)$
then $A$ acts freely on $\T^2$. 

Let $\bar \phi: A \to \Homeo(\T^2)$ be the injective homomorphism
induced by $\phi$ and let $M$ be the quotient of $\T^2$ by this
action.  Since $A$ acts freely, $M$ is a closed surface, and the
projection $p: \T^2 \to M$ is a $k$-fold covering for some $k$. The
Euler characteristic of $M$ is $1/k$ times the Euler characteristic of
$\T^2$ and so is zero, proving that $M$ is homeomorphic to a torus and
hence has abelian fundamental group.
Since $A$ is isomorphic to the group of covering
translations for $p: T^2 \to M$, $A$ is isomorphic to a quotient of
$\pi_1(M)$ and hence is abelian.
\end{proof}

We can now provide the proof of our result about homomorphisms
to $\Homeo(\T^2).$
\medskip

\noindent
{\bf Theorem~\ref{thm:torus}.}
{\em Suppose that S is a closed oriented surface with 
genus $g \ge 3$. Then every homomorphism  $\phi: \mcg(S) \to \Homeo(\T^2)$
is trivial.}

\begin{proof}
We first observe that $\mcg(\T^2) \cong SL(2, \Z)$ so
by Theorem~(\ref{thm:lin-rep})  the map from 
$\mcg(S)$ to $\mcg(\T^2)$ induced by $\phi$ is trivial.
Hence the image of $\phi$ lies in  
$\Homeo_0(\T^2)$, the homeomorphisms isotopic to the identity.

Let $H$ be the subgroup of $SO(3)$ generated
by $u$, a rotation of $2\pi/g$ about the $z$-axis,
and $v$, a rotation of $\pi$ about the $x$-axis.
Then $H$ is a group of order $2g$ which is non-abelian (this
uses $g \ge 3$). In fact $H$ is a semi-direct product of 
$\Z/g\Z$ with $\Z/2\Z$, with the non-trivial element of $\Z/2\Z$ 
acting on $Z/g\Z$ by sending an element to its inverse.
One calculates easily
\[
v^{-1}u^{-1}vu = u^2.
\]

We want to show that if
$S$ is a closed surface of genus $g \ge 3$ then
there is a subgroup of $\mcg(S)$ isomorphic to
$H$. Let $M_0$ be the unit sphere in $\R^3$ and
note that it is $H$ invariant.
The $H$ orbit of the point $(1,0,0) \in S^2$
consists of $g$ points uniformly spaced about the equator.
We remove a small open disk about each of these points thus
creating a genus zero surface $M$ with $g$ boundary components which
is $H$ invariant.
Next we attach $g$ genus one surfaces each with one boundary component to
each of the boundary components of $M$.  This creates a surface
$S$ of genus $g$.  If we make these attachments in a symmetric
fashion we may assume that $S$ is invariant under the group
$H$ acting on $\R^3.$ The restriction to $S$ of 
each non-trivial element of $H$ represents
a non-trivial element of $\mcg(S)$ and the assignment of an
element of $H$ to this representative in $\mcg(S)$ is an 
embedding of $H$ in $\mcg(S).$

The restrictions of $u$ and $v$ to $S$ are representatives of elements
$U, V \in \mcg(S)$ and $U$ and $V$ generate a subgroup of $\mcg(S)$
isomorphic to $H$.  Suppose $\phi: \mcg(S) \to \Homeo(\T^2)$ is a
homomorphism.  Since $U^2 = V^{-1}U^{-1}VU$ is a commutator,
Lemma~(\ref{lem:abelian}) implies that $\phi(U^2) = id \in
\Homeo(\T^2).$ On the other hand $u^2$ induces a non-trivial
permutation of the $g$ tori we attached to $M$.  Let $\alpha$ be a
non-separating simple closed curve in one of these tori and let $\beta
= u^2(\alpha).$ Then $\alpha$ and $\beta$ are disjoint simple closed
curves in $S$ whose union does not separate.  Since
\[
T_\beta = U^2 T_\alpha U^{-2},
\]
we have 
\[
\phi(T_\beta) = \phi(U^2) \phi( T_\alpha) \phi(U^{-2}) = \phi( T_\alpha).
\]
Part (1) of Proposition~(\ref{prop:hom}) now implies that $\phi$ is trivial.
\end{proof}

\section{Homomorphisms to  $\Diff(S^2)$.}

\noindent
{\bf Theorem~\ref{thm:S2}.}
{\em Suppose that S is a closed surface 
of genus $g \ge 7$.
Then every homomorphism $\phi: \mcg(S) \to \Diff(S^2)$
is trivial.}

\begin{proof}
If the genus $g$ is odd we let $S_0$ be a torus and
if $g$ is even we let it be a surface of genus two.
In either case we embed $S_0$ in $\R^3$ so that it 
is invariant under rotation by $\pi$
about the $z$-axis and let $R$ denote this  rotation.
We consider
two surfaces $Y^\pm$ of genus $(g-1)/2$ or genus
$(g-2)/2$ (depending on the parity of $g$)  each with a single boundary
component. We remove two open disks from $S_0$, which
are symmetrically placed so that they are interchanged by $R$,
and attach the surfaces $Y^\pm$ along boundaries to the
boundary components of $S_0$ thus created.  We do this in
such a way that $Y^+$ lies in the half space with $x$-coordinate
positive and $T(Y^\pm) = Y^\mp.$ Then $S = S_0 \cup Y^+ \cup Y^-$
has genus $g$ and $R: S \to S,$ the restriction of $R$ to $S$,
is an involution which induces
an element of order two $\tau \in \mcg(S).$  
The element $\tau$ induces
an inner automorphism which we denote by $\tau_\#: \mcg(S) \to \mcg(S).$

There are embeddings $\mcg(Y^\pm)\to \mcg(S)$ induced
by the embeddings of $Y^\pm$ in $S$.  Let $G^\pm$ denote the subgroup
of $\mcg(S)$ which is the image of the embedding of
$\mcg(Y^\pm).$ Note that elements of $G^+$ commute
with elements of $G^-$ since they have representatives with disjoint
support.  Also note that $\tau_\#(G^\pm) = G^\mp.$

Let $G_0$ be the subgroup of $\mcg(S)$ consisting of all elements
of the form $\eta \tau_\#(\eta)$ for $\eta \in G^+$.  It is clear
that the correspondence $\eta \mapsto \eta \tau_\#(\eta)$ defines
an isomorphism from $G^+$ to $G_0$ and hence $G_0$ is isomorphic
to $\mcg(Y^+).$ Observe that
\[
\tau_\#( \eta \tau_\#(\eta)) = \tau_\#(\eta) \eta = \eta \tau_\#(\eta),
\]
so $\tau$ commutes with every element of $G_0.$ 

Now let $\alpha$ be a non-separating simple closed in 
$Y^+$ and let $\beta = R (\alpha).$
Recall that $T_\alpha$ denotes the left Dehn twist about $\alpha$.

We want to
show that $\phi(T_\alpha) = \phi(T_\beta)$ or $\phi(T_{\beta})^{-1}$.
Since $\alpha$ and $\beta$ are disjoint simple closed curves in
$S$ whose union does not separate it will follow from part (1) of
Proposition~(\ref{prop:hom}) that $\phi: \mcg(S) \to \Diff(S^2)$
is trivial.

To do this we consider $h = \phi(\tau) \in \Diff(S^2).$
The diffeomorphism $h$ of $S^2$ has order at most two.
If $h$ is  is the identity then
\[
\phi(T_\alpha) =  h \circ \phi(T_\alpha) \circ h^{-1} =
\phi(\tau T_\alpha \tau^{-1}) = \phi(T_\beta).
\]
Hence we may assume $h$ has order two.  In that
case $h$ has exactly two fixed points.  To see this
note that the derivative of $h$ at a fixed point can
only be $-I$ (since $h$ preserves orientation) and
hence the index of each fixed point is $+1$.  Since
the sum of the fixed point indices is $2$, the Euler
characteristic of $S^2$, there must be two fixed points.
The set of  fixed points must be preserved by each element 
of $\phi(G_0)$ since these elements commute with $h$. But also if some 
elements of $\phi(G_0)$ permute these fixed points that
defines a non-trivial homomorphism from $G_0$ to $\Z/2\Z$.
Such a homomorphism cannot exist since
of $G_0 \cong \mcg(Y^+)$ is perfect by Proposition~(\ref{prop:homology}).  
We conclude there is a global fixed point $p \in S^2$ for $G_0$.

By Theorem~(\ref{thm:lin-rep}) any homomorphism from 
$G_0 \cong \mcg(Y^+)$ to $GL(2, \R)$ is trivial.  
We can conclude 
that the derivative $D_p(\phi(\eta)) = I$ for every $\eta \in G_0.$

Since $G_0$ is perfect $\phi(G_0)$ admits no
non-trivial homomorphisms to $\R$.
Applying Thurston's stability theorem \cite{Thurston:stability}
to $\phi(G_0)$ we conclude that $\phi(G_0)$ is trivial.
Finally we observe that if we let
\[
\eta = T_\alpha T_\beta = T_\alpha \tau_\#(T_\alpha) \in G_0,
\]
then $\phi(\eta) = id$ implies
\[
\phi(T_\alpha) = \phi(T_\beta)^{-1}.
\]
Once again Proposition~(\ref{prop:hom}) (1) implies 
that $\phi: \mcg(S) \to \Diff(S^2)$
is trivial.
\end{proof}

{\bf Theorem~\ref{thm:punc}.}
{\em Suppose that S is an oriented surface with 
genus $g$ with $k$ punctures and no boundary.  
If $g \ge 7$ and $0 \le k \le 2g-10,$
then every homomorphism $\phi: \mcg(S) \to \Diff(S^2)$
is trivial.}

\begin{proof}
If $S_0'$ is a surface of genus $g_0 \ge 1$ then it can be embedded in
$\R^3$ in such a way that it is invariant under $R$, rotation
about the $z$-axis by $\pi,$ and so that it intersects the
$z$-axis in $2g_0 + 2$ points.  So $R$ has $2g_0 + 2$ fixed points.

If $k \le 2g_0 +2$ we let $S_0$ be $S_0'$ punctured at
$k$ of the points of $\Fix(R)$ so $R$ represents an element
of $\mcg(S_0)$.  We now repeat the proof of Theorem~\ref{thm:S2}
using this $S_0$ instead of the surface of genus one or two labeled
$S_0$ in that proof.

The surface $S$ is constructed as before by attaching two
surfaces of genus $\ge 3$ to $S_0$.  Hence the resulting
surface $S$ has genus $g \ge g_0 + 6 \ge 7.$ We then
conclude as before that $\phi$ is trivial. The restriction on $k$ is
\[
k \le 2g_0 + 2 \le 2(g-6) + 2 = 2g -10.
\]
\end{proof}

\bibliographystyle{plain}
\bibliography{mcg}
\end{document}